%%%%%%%%%%%%%%%%%%%%%%%%%%%%%%%%%%%%%%%%%%%%%%%%%%%%%%%%%%%%%%%%
%                                                              %
%            On Erdos' extremal problem                        %
%            on matchings in hypergraphs                       %
%                                                              %
%                         by                                   %
%                                                              %
%        Tomasz {\L}uczak and Katarzyna Mieczkowska            %
%                                                              %
%                                                              %
%                  February 16, 2012                           %
%                                                              %
%%%%%%%%%%%%%%%%%%%%%%%%%%%%%%%%%%%%%%%%%%%%%%%%%%%%%%%%%%%%%%%%
% 
\documentclass[12pt,reqno]{amsart}
\usepackage{latexsym,amsmath}
\usepackage{amsthm}
\usepackage{amssymb}
\usepackage{amsthm}
\usepackage{graphicx}
\begin{document}
\newtheorem{theorem}{Theorem}
\newtheorem*{conjerd}{Erd\H{o}s Conjecture}
\newtheorem{fact}[theorem]{Fact}
\newtheorem{claim}{Claim}
\newtheorem{lemma}[theorem]{Lemma}
\newtheorem{cor}[theorem]{Corollary}
\newcommand\eps{\varepsilon}
\newcommand\pl{{\textrm{pl}}}
\newcommand\hV{\hat V}
\newcommand\hM{{\hat M}}
\newcommand\bV{\bar V}
\newcommand\bv{\bar v}
\newcommand\bM{{\bar M}}
\newcommand\bG{\bar G}
\newcommand\tG{\tilde G}
\newcommand\bH{\bar H}
\newcommand\Gn{\mathcal{G}_n}
\newcommand\Ge{\mathcal{G}_n(\eps)}
\newcommand\cH{\mathcal{H}}
\newcommand\cM{\mathcal{M}}
\newcommand\Cl{\textrm{Cl}}
\newcommand\Cov{\textrm{Cov}}
\newcommand{\sh}{\textbf{sh}}
\newcommand{\Sh}{\textbf{Sh}}
\title[Matchings in hypergraphs]
{On Erd\H{o}s' extremal problem\\ on matchings in hypergraphs}
\author{Tomasz \L{u}czak}
\address{Adam Mickiewicz University,
Collegium Mathematicum,
ul.~Umultowska 87,
61-614 Pozna\'n, Poland}
\email{\tt tomasz@amu.edu.pl}
\author{Katarzyna Mieczkowska}
\address{Adam Mickiewicz University,
Collegium Mathematicum,
ul.~Umultowska 87,
61-614 Pozna\'n, Poland}
\email{\tt kaska@amu.edu.pl}\thanks{The first author partially supported by 
the Foundation for Polish Science.}
\keywords {extremal graph theory, matching, hypergraphs} 
%homomorphism
\subjclass{%
05C35,  %extremal problems
05C65, %Hypergraphs  
05C70.} % Factorization, matching, partitioning, covering and packing
\date{February 16, 2012}

\begin{abstract}
In 1965 Erd\H{o}s conjectured that the number of edges 
in $k$-uniform hypergraphs 
on $n$ vertices in which the largest matching has $s$ edges 
is maximized for hypergraphs of one of two special types. 
We settled this conjecture in the affirmative for
$k=3$ and $n$ is large enough.
\end{abstract}

\maketitle

\section{Introduction}

A {\em $k$-uniform hypergraph} or, briefly, a {\em $k$-graph} $G=(V,E)$ 
is a set of vertices $V\subseteq \mathbb{N}$ 
together with a family $E$ of 
$k$-element subsets of $V$, which are called edges.
We denote by $v(G)=|V|$ and $e(G)=|E|$ the number of vertices 
and edges of $G=(V,E)$, respectively. A family
of disjoint edges of $G$ is a {\em matching}, and by $\mu(G)$ we mean the size 
of the largest matching in $G$. In this paper we deal with the 
problem of maximizing $e(G)$ given $v(G)$ and $\mu(G)$. More formally, 
let $\cH_k(n,s)$ denote the set of all $k$-graphs $G=(V,E)$ 
such that $|V|=n$ and $\mu(G)=s$; moreover let
\begin{equation}\label{eq:deff}
\mu_k(n,s)=\max\{e(G)\,\colon\, G\in \cH_k(n,s)\},
\end{equation}
and 
\begin{equation}\label{eq:defm}
\cM_k(n,s)=\{ G\in \cH_k(n,s)\,\colon\, e(G)=\mu_k(n,s)\}\,.
\end{equation}
Let us describe two kinds of $k$-graphs from $\cH_k(n,s)$ which are
natural candidates for members of $\cM_k(n,s)$. 
By $\Cov_k(n,s)$ we denote the family of 
$k$-graphs $G_1=(V_1,E_1)$ such that $|V_1|=n$ and  
for some  subset $S\subseteq V_1$, $|S|=s$, we have
$$E_1=\{e\subseteq V_1\,\colon\, e\cap S\neq \emptyset\  \textrm{and} \ |e|=k\}\,.$$
Clearly, if $s\le n/k$, then $\Cov_k(n,s)\subseteq \cH_k(n,s)$.  
Furthermore, we define $\Cl_k(n,s)$ as 
the family of all $k$-graphs $G_2=(V_2,E_2)$ 
which  consists of a~complete subgraph on $ks+k-1$ 
and some isolated vertices, i.e. if for some subset $T\subseteq V_2$, $|T|=ks+k-1$, 
we have 
$$E_2=\{e\subseteq  T\,\colon\,   \ |e|=k\}\,.$$
Again, we have  $\Cl_k(n,s)\subseteq \cH_k(n,s)$. 
In 1965 Erd\H{o}s~\cite{E} conjectured that, indeed, the function $\mu_k(n,s)$ is 
fully determined by $k$-graphs of these two types, namely that for every 
$k$, $n$ and $s$, where $ks\le n-k+1$, the following holds 
\begin{equation}\label{eq:erdos_conj}
\mu_k(n,s)=\max\bigg\{\binom nk-\binom{n-s}k\, ,\binom {sk+k-1}k\bigg\}.
\end{equation}

Although the conjecture remains widely open a few  results have been proved 
in this direction (cf. Frankl~\cite{F}). Most of them are dealing with the case 
when $n$ is large compared to $s$, proving that
\begin{equation}\label{eq:bde1}
\cM_k(n,s)=\Cov_k(n,s)\quad \textrm{for} \quad  n\ge g(k)s,
\end{equation}
where $g(k)$ is some function of $k$. The best published bound for $g(k)$  for general $k$ 
is due to Bollob\'as, Daykin and Erd\H{o}s~\cite{BDE} who showed that~(\ref{eq:bde1}) 
holds whenever $g(k)\ge 2k^3$; recently,   Huang, Loh, and~Sudakov~\cite{HLS}
announced that~(\ref{eq:bde1}) remains true for $g(k)\ge 3 k^2$. As for the special case
of $k=3$ the current record belongs to Frankl, R\"odl and Ruci\'nski~\cite{FRR}
who verifed   (\ref{eq:bde1}) for $k=3$ and $n\ge 4s$.

The main result of this paper states that for $k=3$ and $n$ large enough 
(\ref{eq:erdos_conj}) holds for every $s$ and, moreover, the only extremal $3$-graphs 
belong to either  $\Cov_k(n,s)$ or $\Cl_k(n,s)$.

\begin{theorem}\label{thm:main}
There exists $n_0$ such that for $n\ge n_0$ large enough and each $s$, $1\le s\le (n-2)/3$, 
we have 
\begin{equation}\label{eq:erdos_conj3}
\mu_3(n,s)=\max\Big\{\binom n3-\binom{n-s}3\,, \binom {3s+2}3\Big\}\,.
\end{equation}

Furthermore, for such parameters $n$ and $s$, we have
$$\cM_3(n,s)\subseteq \Cov_3(n,s)\cup  \Cl_3(n,s)\,.$$
\end{theorem}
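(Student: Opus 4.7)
The plan is to combine the best published prior bound with a new argument for the remaining range of $s$. The two terms on the right of~\eqref{eq:erdos_conj3} cross at some $s^*\approx 0.287\,n$: for $s\le s^*$ the cover bound $\binom n3-\binom{n-s}3$ is larger and the extremal candidate is $\Cov_3(n,s)$, while for $s>s^*$ the clique bound $\binom{3s+2}3$ is larger and the candidate is $\Cl_3(n,s)$. For $s\le n/4$ the theorem follows from~\eqref{eq:bde1} via the Frankl--R\"odl--Ruci\'nski bound, so the work is to handle $n/4 < s\le (n-2)/3$. This range contains both a ``cover regime'' ($n/4<s\le s^*$) and a ``clique regime'' ($s^*<s\le (n-2)/3$).

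In both regimes I would start with an extremal $G\in \cH_3(n,s)$ and a maximum matching $M=\{e_1,\dots,e_s\}$, and set $U=V(M)$, $W=V\setminus U$, $|W|=n-3s$. Maximality of $M$ forbids any edge inside $W$ and, more importantly, forbids \emph{augmenting trades}: for disjoint matching edges $e_{i_1},\dots,e_{i_r}$ and distinct vertices $w_1,\dots,w_{r+1}\in W$, one cannot find $r+1$ pairwise disjoint edges of $G$ that collectively rearrange $e_{i_1}\cup\dots\cup e_{i_r}\cup\{w_1,\dots,w_{r+1}\}$ into a larger matching. Classifying edges by their profile $(|e\cap e_1|,\dots,|e\cap e_s|,|e\cap W|)$ converts these forbidden trades into linear constraints on profile counts.

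In the cover regime I would aim for a stability statement saying that an extremal $G$ must admit a set $S$, $|S|=s$, meeting all but $o(n^3)$ edges, then remove the slack by a local exchange and conclude $G\in \Cov_3(n,s)$. In the clique regime the defect $|W|<0.14\,n$ is small; summing the augmenting-trade bounds over $M$ should force the number of edges meeting $W$ in two or three vertices to be negligible compared to $\binom{3s+2}3$, which concentrates almost all edges on some $(3s+2)$-set $T$, and a further local swap discards any stray vertex of $W\setminus T$ to place $G\in \Cl_3(n,s)$.

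The main obstacle, I expect, is the clique regime, and specifically making the approximation bound on edges meeting $W$ sharp enough to eliminate every ``mixed'' structure lying strictly between cover and clique---for instance, configurations in which a $(3s+1)$-subset carries many but not all edges with a few extras spilling onto $W$, or configurations with a cover-like kernel of size slightly larger than $s$. This requires simultaneous control of augmenting configurations across many matching edges at once, and it is precisely here that the hypothesis ``$n$ large enough'' enters, to absorb the lower-order error terms. The uniqueness assertion $\cM_3(n,s)\subseteq \Cov_3(n,s)\cup \Cl_3(n,s)$ then follows by chasing equality through the entire chain of inequalities.
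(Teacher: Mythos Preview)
Your two–stage architecture---first an approximate/stability statement, then a local repair to land exactly in $\Cov_3$ or $\Cl_3$---matches the paper's. But two of the load-bearing ideas are missing, and without them the plan does not close.

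First, you never reduce to \emph{shifted} hypergraphs. The paper's argument hinges on replacing $G$ by $\Sh(G)$ (Lemma~\ref{lem:shcm}), so that a maximum matching $M$ can be split as $I\cup J\cup K$ with the order structure $i<j<k$ on each edge; the auxiliary link hypergraphs $F_1,F_2,F_3$ then inherit shiftedness, which is used repeatedly in the case analysis (e.g.\ to force $F_1\cap\bigcup T\subseteq I$ for good triples). Your ``augmenting trades on profile counts'' are the right kind of constraint, but without shifting you have no canonical labelling of the three vertices of each $e_i$, and the forbidden-trade inequalities you would actually obtain are symmetric averages that are too weak to distinguish the cover and clique extremes from the intermediate linear-programming vertices. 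This is exactly the place where you say you ``expect the main obstacle''; the paper resolves it by shifting. A second, related omission is the \emph{granularity} of the counting: the paper does not bound edge-profiles directly but instead sums contributions $f_1(T),f_2(T),f_3(T)$ over \emph{triples} $T\in\binom{M}{3}$, proving (Claim~\ref{cl33}) a short list of admissible $(f_1,f_2,f_3)$-vectors for good triples and then maximising the resulting linear form in five variables. It is this triple-level LP whose extreme points are precisely the cover configuration (all edges through $I$) and the clique configuration ($f_3=27$); an edge-level or pair-level count would produce an LP with spurious optima corresponding to the ``mixed'' structures you worry about. Finally, note that the uniqueness part is \emph{not} obtained by ``chasing equality'' through the approximate argument: the paper proves separately (Lemma~\ref{lem:ext}) that if $\Sh(G)\in\Cov_3\cup\Cl_3$ then already $G\in\Cov_3\cup\Cl_3$, using a two-colouring of $(k-1)$-sets and Fact~\ref{fact1}. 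Your plan gives no mechanism for pulling the exact structure back from the approximate one; the paper's Lemma~\ref{lem:types} (stability of $\Cov$ and $\Cl$ inside $\cM_k$) is a nontrivial standalone argument that again leans on the Bollob\'as--Daykin--Erd\H os bound~\eqref{eq:bde1}.
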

Let us remark that although we have made no effort to get effective bounds 
for  $n_0$, it seems to be of rather moderate order and  
it is quite conceivable that a  meticulous  analysis of cases
(possibly, with some help of computer) can give 
(\ref{eq:erdos_conj3}) for all values of $n$. Note however that the second 
part of the statement does not hold when $n=6$ and $s=1$ (or, for  
general $k$-graphs, for $n=2k$, $k\ge 3$, and $s=1$). Indeed, in this case 
$$|\cM_k(2k,1)|= 2^{\frac12\binom {2k} k},$$ 
while 
$$|\Cov_k(2k,1)|=|\Cl_k(2k,1)|=2k\,.$$

The structure of the paper goes as follows.
First  we show that if the structure of a large graph from $\cM(n,s)$ 
is `close' to a graph from  $\Cov_k(n,s)$, 
then it belongs to  $\Cov_k(n,s)$,
and the same remains true for  $\Cl_k(n,s)$. Thus, an `asymptotic version' 
of Theorem~\ref{thm:main} implies that it holds in its exact form, provided 
$n$ is large enough.  
Then,  we recall the definition and basic properties of 
the shift operation which is another important ingredient of our argument.
Finally, in the last 
part of the paper, we concentrate on the case $k=3$ and show that then 
the required asymptotic result indeed holds for shifted $3$-graphs.

\section{Stability of $\Cov$ and $\Cl$}

The aim of this section is to show that if a $k$-graph $G\in \cM_k(n,s)$ 
is, in such a way, similar  to  graphs from  $\Cov_k(n,s)$ [or $\Cl_k(n,s)$], 
then in fact it belongs to this family. In order to make it precise let us introduce
families of graphs $\Cov_k(n,s; \eps)$ and $\Cl_k(n,s;\eps)$. 
Let us recall that if $G=(V,E)$ belongs
to $\Cov_k(n,s)$, then there exists a set $S\subseteq V$, $|S|=s$, which covers all
edges of $G$. We say that $G\in \Cov_k(n,s;\eps)$ for some $\eps>0$,
if there exists a set $S\subseteq V$, $|S|=s$, which covers all but at most $\eps |E|$
edges of $G$. Moreover, we define  $\Cl_k(n,s;\eps)$ as the set of all $k$-graphs
$G$ which contain a complete subgraph on at least $(1-\eps)ks$ vertices. Then the main 
result of this section can be stated as follows.

\begin{lemma}\label{lem:types}
For every $k\ge 3$ there exist $\eps>0$ and $n_0$ such that
for every $n\ge n_0$,  $1\le s \le n/k$, and $G\in \cM_k(n,s)$ the following holds:
\begin{enumerate}
\item[(i)] if $G\in \Cov_{k}(n,s;\eps)$, then $G\in \Cov_{k}(n,s)$;
\item[(ii)]  if $G\in \Cl_{k}(n,s;\eps)$, then $G\in \Cl_{k}(n,s)$.
\end{enumerate}
\end{lemma}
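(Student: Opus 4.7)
The plan is to treat both parts of Lemma~\ref{lem:types} by a common ``rigidification'' argument: extremality ($G\in\cM_k(n,s)$) combined with approximate structure forces exact structure, since any deviation would either create a matching of size $s+1$ in $G$ or yield a competing graph with strictly more edges still satisfying $\mu\le s$.

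For part (i), let $S$ be the cover of size $s$ witnessing $G\in\Cov_k(n,s;\eps)$, and call the at most $\eps e(G)$ edges of $G$ disjoint from $S$ the \emph{bad} edges. The goal is to show there are no bad edges: once this is established, every edge of $G$ meets $S$, so $e(G)\le\binom{n}{k}-\binom{n-s}{k}$, and combining with extremality and $\mu_k(n,s)\ge\binom{n}{k}-\binom{n-s}{k}$ forces equality, so $G$ contains every edge meeting $S$, i.e., $G\in\Cov_k(n,s)$. To rule out a bad edge $e_0\subseteq V\setminus S$, I would produce an $(s+1)$-matching in $G$ by extending $\{e_0\}$ with $s$ disjoint edges of $G$ that each meet $S$ and all lie in $V\setminus V(e_0)$. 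The closeness bound gives $\sum_{v\in S}d_G(v)\ge(1-\eps)(\binom{n}{k}-\binom{n-s}{k})$, pushing the average degree in $S$ close to the maximum $\binom{n-1}{k-1}$, and a greedy or Hall-type procedure that processes the vertices of $S$ one at a time then picks the required edges, one through each $v\in S$ with its other $k-1$ vertices in $V\setminus(S\cup V(e_0))$ and disjoint from previous choices.

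For part (ii), I would start with the clique $K_0$ on $m_0\ge(1-\eps)ks$ vertices guaranteed by $\Cl_k(n,s;\eps)$. The conclusion needs both a clique $K^\ast$ on $ks+k-1$ vertices and the isolation of all remaining vertices. Isolation is automatic once the clique is obtained, since any edge $f$ outside $K^\ast$ shares at most $k-1$ vertices with $K^\ast$, so $K^\ast\setminus f$ still contains $ks$ vertices, providing $s$ disjoint $k$-edges that together with $f$ yield a forbidden $(s+1)$-matching. To produce $K^\ast$ I would enlarge $K_0$ iteratively: as $e(G)=\mu_k(n,s)\ge\binom{ks+k-1}{k}$ strictly exceeds $\binom{m}{k}$ whenever $m<ks+k-1$, many edges of $G$ use a vertex outside the current clique $K$. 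A double-counting argument then locates $v\notin K$ such that $\{v\}\cup T\in E(G)$ for every $(k-1)$-subset $T\subseteq K$, for otherwise the deficit summed over candidate vertices would keep $e(G)$ below $\binom{ks+k-1}{k}$. Replacing $K$ by $K\cup\{v\}$ and iterating terminates with $|K|=ks+k-1$.

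The main obstacle in both parts is ensuring the greedy/counting arguments succeed uniformly in $s$. The hypothesis of~(i) combined with $e(G)\ge\binom{ks+k-1}{k}$ forces $(1-\eps)\binom{ks+k-1}{k}\le\binom{n}{k}-\binom{n-s}{k}$, implicitly restricting $s$ to the regime where the Cov-candidate of~(\ref{eq:erdos_conj3}) dominates (or nearly dominates) the Cl-candidate, and the analogous bound in~(ii) restricts to the opposite regime. Choosing $\eps$ small relative to the gap between the two candidate values at their intersection point (roughly $s/n\approx 0.287$ for $k=3$) and $n_0$ large enough to absorb additive error terms should then make the matching/clique-extension arguments go through.
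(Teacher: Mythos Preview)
Your overall plan is natural, but both halves have real gaps, and in each case the paper's proof supplies precisely the missing idea.

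\textbf{Part (i).} The step ``a greedy or Hall-type procedure then picks the required edges, one through each $v\in S$'' does not go through from averaging alone. The deficit (non-edges among $k$-sets meeting $S$) is at most $\eps\bigl(\binom{n}{k}-\binom{n-s}{k}\bigr)\le \eps s\binom{n-1}{k-1}$, and nothing prevents it from concentrating on a \emph{single} vertex $v_0\in S$. When $s=\Theta(n)$ this could leave $v_0$ with degree far below $\binom{n-1}{k-1}-\binom{n-ks-1}{k-1}$, which is exactly the threshold you need for the greedy step through $v_0$ to succeed (at the last step the available pool in $V\setminus S$ has only about $n-ks$ vertices). The paper handles this with a degree dichotomy (its Claim~1): using $G\in\cM_k(n,s)$, any vertex with degree \emph{above} that threshold must in fact have full degree $\binom{n-1}{k-1}$. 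This lets one peel off the full-degree vertices of $S$; what remains is an extremal problem $\bar G\in\cM_k(n-s+t,t)$ where $t$ is either large (and a direct edge-count comparison with $\Cov_k(n-s+t,t)$ gives a contradiction) or small enough that the Bollob\'as--Daykin--Erd\H{o}s theorem applies and forces $t=0$. Your argument is missing both the dichotomy and the reduction to BDE.

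\textbf{Part (ii).} The double-counting clique-extension step is the weak point. Saying ``for otherwise the deficit summed over candidate vertices would keep $e(G)$ below $\binom{ks+k-1}{k}$'' tacitly assumes every edge of $G$ meets the current clique $K$ in at least $k-1$ vertices. But edges with two or more vertices outside $K$ are not controlled at all by this count, and there is no a priori reason they are few: a single such edge only blocks $k-2$ vertices of $K$, so it cannot by itself be extended to an $(s+1)$-matching when $|K|\approx(1-\eps)ks$. The paper's argument is substantially different: it fixes a maximum matching $M$ chosen to maximise $|\bigcup M\cup U|$ (with $U$ the largest clique), shows $|\bigcup M\cup U|=ks+k-1$, and then compares $G$ with the clique on $\bigcup M\cup U$. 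The edges of $G$ leaving $U$ are split into ``thick'' and ``thin'' according to how many completions they have into $U$; thick edges are shown (again by a swap using extremality) to be fully present, and the hypergraph of thick traces is controlled via BDE. This bookkeeping is what bounds the edges with several vertices outside $U$, which your outline does not address. Your isolation argument (once a $(ks+k-1)$-clique exists, every outside edge yields an $(s+1)$-matching) is correct, but getting the clique is the hard part.
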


Before we prove the lemma let us comment briefly on 
the formula~(\ref{eq:erdos_conj}). 
If by $s_0(n,k)$ we define the smallest $s$ for which 
$$\binom nk-\binom{n-s}k\le  \binom {ks+k-1}k\,,$$
then it is easy to see that 
$$\lim_{n\to\infty}\frac{s_0(n,k)}{n}=\alpha_k\,,$$
where $\alpha_k\in (0,1)$ is the solution of the equation
\begin{equation*}\label{eq:alpha0}
1-(1-\alpha_k)^k=k^k\alpha^k\,.
\end{equation*}
One can check that for all $k\ge 3$ we have 
\begin{equation}\label{eq:alpha}
\frac{1}{k}-\frac{1}{2k^2}< \alpha_k<\frac{1}{k}-\frac{2}{5k^2};
\end{equation}
in fact, $(1-k\alpha_k)k\to -\ln(1-e^{-1})=0.4586...$ as $k\to\infty$.

\begin{proof}[Proof of Lemma~\ref{lem:types}] In order to show (i) let us start with
the following observation.

\begin{claim}\label{cl1}
If $G\in \cM_k(n,s)$ contains a vertex $v$ which is contained in more than
$\binom {n}{k-1}-\binom {n-ks-1}{k-1}$ edges of $G=(V,E)$, then $v$ belongs to
$\binom {n-1}{k-1}$ edges of $G$. 
\end{claim}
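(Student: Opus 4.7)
The plan is a short extremality-plus-counting argument by contradiction. Suppose that $d_G(v)<\binom{n-1}{k-1}$, so that some $k$-set $e^*:=\{v\}\cup A$, where $A\subseteq V\setminus\{v\}$ with $|A|=k-1$, is \emph{not} in $E(G)$. Since $G$ already attains $\mu_k(n,s)$ edges, the strictly larger hypergraph $G+e^*$ cannot lie in $\cH_k(n,s)$, so $\mu(G+e^*)>s$. On the other hand any matching of $G+e^*$ either stays inside $G$ or consists of $e^*$ together with a matching of $G$ disjoint from $e^*$, so $\mu(G+e^*)\le s+1$. Hence $\mu(G+e^*)=s+1$, and I can extract a matching $M\subseteq E(G)$ of size $s$ whose vertex set $V(M)$ is disjoint from $\{v\}\cup A$, in particular from $v$ itself.

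The second step is the key counting. For every edge $f\in E(G)$ through $v$ write $f=\{v\}\cup B$ with $B\subseteq V\setminus\{v\}$, $|B|=k-1$. If $B$ were disjoint from $V(M)$, then $M\cup\{f\}$ would be a matching of size $s+1$ in $G$, contradicting $\mu(G)=s$. Thus every such $B$ meets the $ks$-element set $V(M)\subseteq V\setminus\{v\}$, and counting $(k-1)$-subsets of $V\setminus\{v\}$ that intersect $V(M)$ yields
\begin{equation*}
d_G(v)\ \le\ \binom{n-1}{k-1}-\binom{n-1-ks}{k-1}\ =\ \binom{n-1}{k-1}-\binom{n-ks-1}{k-1}.
\end{equation*}

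Combining this upper bound with the hypothesis $d_G(v)>\binom{n}{k-1}-\binom{n-ks-1}{k-1}$ forces $\binom{n}{k-1}<\binom{n-1}{k-1}$, which is absurd; this contradiction completes the argument and shows that $d_G(v)$ must equal the only remaining possibility, namely $\binom{n-1}{k-1}$. I do not anticipate any real obstacle here: the proof is a three-line squeeze between an extremality-driven upper bound and a binomial slack of $\binom{n-1}{k-2}$, and the only point that requires attention is verifying that $\mu(G+e^*)$ is exactly $s+1$, so that the auxiliary matching $M$ genuinely avoids both $v$ and $A$.
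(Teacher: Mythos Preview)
Your proof is correct and follows essentially the same route as the paper's: assume some $k$-set $e^*\ni v$ is missing, use extremality of $G$ to force a matching of size $s+1$ in $G+e^*$ containing $e^*$, strip off $e^*$ to obtain a size-$s$ matching $M\subseteq E(G)$ avoiding $v$, and then observe that every edge through $v$ must hit $V(M)$, bounding $d_G(v)\le \binom{n-1}{k-1}-\binom{n-ks-1}{k-1}$, which contradicts the hypothesis. The paper phrases the last step as ``since the degree is large, some edge $\{v\}\cup f$ avoids $\bigcup(M\setminus\{e\})$'' rather than as an explicit upper bound on $d_G(v)$, but this is merely the contrapositive of your counting; the arguments are the same.
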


\begin{proof} Take a vertex $v$
of large degree, and let us suppose that 
$e$ is a~$k$-subset of $V$ such that $v\in e$ and $e\notin E$. 
Then, by the definition of $\cM_k(n,s)$, the graph $G\cup e$ contains a matching $M$ 
of size $s+1$, where, clearly, $e\in M$. However, since the degree of $v$ is large, 
there exists a $(k-1)$-element subset $f\subseteq V\setminus \bigcup M$ such that
$e'=\{v\}\cup f$ is an edge of $G$. But then, $M'=M\setminus \{e\}\cup \{e'\}$ is a matching 
of size $s+1$ in $G$. This contradiction shows that each $k$-element subset of $V$
which contains $v$ is an edge of $G$.
\end{proof}
  
Now we prove (i). Let us assume that $G=(V,E)\in \cM_k(n,s)$ belongs to  $\Cov_k(n,s;\eps)$ 
and  let $S$ be the set which covers all but at most $\eps|E| $ edges of $G$.
Let $T\subseteq S$ be the set of vertices which are not  contained in 
$\binom {n-1}{k-1}$ edges of $G$ and let $t=|T|$.  We need to show that $t=0$.

Observe first that, because of (\ref{eq:alpha}), we may and shall assume that
$s\le n(1/k-2/(5k^2))$, since otherwise there exists a $k$-graph $G'\in \Cl_k(n,s)$
with more edges than $G$, contradicting the fact that $G\in \cM_k(n,s)$.
Thus, by Claim~\ref{cl1}, the number of edges in which each vertex $v\in T$ 
is contained in is at most 
$$\binom n{k-1}-\binom {n-ks-1}{k-1}\le \Big(1-\Big(\frac{2}{5k}\Big)^{k-1}\Big)\binom {n}{k-1}\,.$$
Now let $\bar G$ denote the $k$-graph obtained from $G$ by deleting all vertices 
from $S\setminus T$ and all edges intersecting with them. It is easy to see that
$\bar G\in \cM_k(n-s+t, t)$. Now, if $t\ge n/(10k^5)\ge s/(10k^4)$, 
for  any $k$-graph $\hat G\in \Cov_k(n-s+t,t)$ we have 
\begin{align*}
e(\hat G)-e(\bar G) \ge &\frac{t}k \Big(\frac{2}{5k}\Big)^{k-1} \binom {n}{k-1}-
2\eps s \binom n{k-1}\\
\ge& \Big(\Big(\frac{2}{5k}\Big)^{k-1}-20 k^5\eps \Big) \frac{t}k\binom {n}{k-1}\,.
\end{align*}
Thus,  if $\eps>0$ is small enough than $\hat G$ has more edges than $\bar G$
contradicting the fact that $\bar G\in \cM_k(n-s+t,t)$. Thus, 
 $t\le n/(10k^5)\le  (n-s+t)/k^3 $. But in such a case, Theorem~\ref{thm:main} holds 
by the result of Bollob\'as, Daykin, and Erd\H{o}s (see (\ref{eq:bde1}) above), so 
$$\bar G \in \cM_k(n-s+t)=\Cov_k(n-s+t,t)$$ 
and, since by the definition no vertex of $T$ has a full degree, $t=0$.
Consequently, $G\in Cov_k(n,s)$ and (i) follows.

Now let assume that $G=(V,E)\in \cM_k(n,s)$ belongs to $Cl_k(n,s;\eps)$.
Let $U$ be the set of vertices of the largest complete $k$-subgraph of $G$
such that $|U|\ge ks(1-\eps)$. Furthermore, 
let $M$ be a matching in $G$ of size $s$ which maximizes 
$|\bigcup M\cup  U|$, and  $M'=\{e\in M\;\colon\; e\not\subseteq U\}$. 
Then, for $n$ large enough, the following holds.

\begin{claim}\label{cl2}
\ 
\begin{enumerate}
\item[(i)] $|\bigcup M\cup U|=ks+k-1$.
\item[(ii)] $|M'|\le 2\eps ks $.
\item[(iii)] each edge of $G$ either is contained in $U$ 
or intersects an edge of~$M'$.
\end{enumerate}
\end{claim}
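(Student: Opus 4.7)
\medskip
\noindent\textbf{Plan of proof.} I would prove the three parts in order, each relying on what came before. For (i), the upper bound $|\bigcup M\cup U|\le ks+k-1$ is immediate from $\mu(G)=s$: if $|U\setminus\bigcup M|\ge k$, then any $k$-subset of $U\setminus\bigcup M$ is an edge of $G$ (since $U$ is a clique) disjoint from $M$, so $M$ extends to a matching of size $s+1$, a contradiction. Hence $|U\setminus\bigcup M|\le k-1$ and $|\bigcup M\cup U|=|\bigcup M|+|U\setminus\bigcup M|\le ks+k-1$. For the reverse inequality I would exhibit a matching of size $s$ achieving this bound. When $|U|\ge ks+k-1$, the upper bound forces $|U|=ks+k-1$ and any $s$ disjoint edges of the clique on $U$ suffice; when $(1-\eps)ks\le |U|<ks+k-1$, start from a maximum matching inside the clique on $U$ and extend it to size $s$ using edges of $G$ placing as few new vertices in $U$ as possible. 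The existence of these extending edges uses $e(G)\ge\binom{ks+k-1}{k}$ (which forces many edges outside the clique on $U$ when $|U|<ks+k-1$) together with $\mu(G)=s$ controlling how they attach to $\bigcup M$.

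Part (ii) follows quickly from (i): one has $|\bigcup M\setminus U|=(ks+k-1)-|U|\le\eps ks+k-1$, and since every edge of $M'$ contributes at least one vertex to $\bigcup M\setminus U$, we get $|M'|\le\eps ks+k-1\le 2\eps ks$ provided $s\ge(k-1)/(\eps k)$, which holds for $n$ large enough.

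For (iii), suppose toward contradiction that some $e\in E(G)$ satisfies $e\not\subseteq U$ and $e\cap\bigcup M'=\emptyset$. If $e$ is also disjoint from $M\setminus M'$, then $M\cup\{e\}$ is a matching of size $s+1$, contradicting $\mu(G)=s$. Otherwise $e$ meets some $r\ge 1$ edges $e_1,\dots,e_r$ of $M\setminus M'$, each contained in $U$, so that $M^\star=(M\setminus\{e_1,\dots,e_r\})\cup\{e\}$ is a matching of size $s-r+1$. Using $|U\setminus\bigcup M|=k-1$ from (i) and $|U\cap e|\le k-1$ (as $e\not\subseteq U$), a direct calculation yields
\[
|U\setminus\bigcup M^\star|=(k-1)+kr-|U\cap e|\ge kr,
\]
so the clique $U\setminus\bigcup M^\star$ contains $r$ pairwise disjoint edges which, appended to $M^\star$, form a matching of size $s+1$ in $G$, again contradicting $\mu(G)=s$. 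The main obstacle is the lower bound in (i), since it requires exploiting the extremality of $G$ to guarantee the existence of suitable non-clique edges to enlarge the union; the upper bound in (i), part (ii), and the swap argument in (iii) are fairly mechanical once the clique structure of $U$ together with $\mu(G)=s$ are exploited.
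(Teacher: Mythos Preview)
Your arguments for the upper bound in (i), for (ii), and for (iii) are correct and essentially match the paper (your swap in (iii) is in fact more explicit than the paper's one-line ``direct consequence of the choice of $M$'').

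The gap is exactly where you flagged it: the lower bound in (i). Your plan there is to construct by hand a matching of size $s$ with large union, starting from a maximum matching in the clique and extending by ``edges of $G$ placing as few new vertices in $U$ as possible'', justified only by the rough count $e(G)\ge\binom{ks+k-1}{k}$. This is not an argument yet: knowing there are many edges with a vertex outside $U$ does not by itself let you control how they sit relative to an already-chosen partial matching, nor does it guarantee you can reach size exactly $s$ while keeping $k-1$ vertices of $U$ uncovered. You would need a real greedy/augmentation argument, and none is given.

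The paper sidesteps this entirely with a one-line use of edge-maximality that you did not exploit. Since $U$ is the \emph{largest} clique and $|U|<n$, there is a vertex $v\notin U$ and a $(k-1)$-set $f\subseteq U$ with $e:=\{v\}\cup f\notin E$ (otherwise $U\cup\{v\}$ would be a bigger clique). Because $G\in\cM_k(n,s)$, the graph $G\cup\{e\}$ has a matching of size $s+1$, necessarily of the form $M^*\cup\{e\}$ with $|M^*|=s$. The $k-1$ vertices of $f=e\cap U$ are then unsaturated by $M^*$, so $|\bigcup M^*\cup U|\ge ks+(k-1)$, and by the choice of $M$ the same holds for $M$. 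This is the missing idea: use that adding any non-edge to $G$ forces a matching of size $s+1$, and choose the non-edge to have $k-1$ vertices in $U$.
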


\begin{proof} 
Observe that at most $k-1$ vertices of $U$ can remain unsaturated by $M$, 
thus $|\bigcup M\cup U|\le ks+k-1$.
On the other hand, since $U$ induces the largest clique in $G$,
there exists a $k$-element subset $e\notin E$ such that $|e\cap U|=k-1$.
Then, since $G\in \cM_k(n,s)$, the graph $G\cup \{e\}$ contains 
a matching $M^*\cup \{e\}$ of size $s+1$. 
Thus, $M^*$ is a matching of size $s$, in which precisely 
$k-1$ vertices from $U$ are unsaturated, 
so $|\bigcup M\cup U|\ge |\bigcup M^*\cup U| \ge ks+k-1$, and (i) follows. 
To prove (ii) observe that $|M'|\leq |V(M')\setminus U|=|U\cup \bigcup M|-|U|$ and use (i),
obtaining $|M'|\le \eps ks+k-1\le 2\eps ks$ for $n$ big enough.
Finally, (iii) is a direct consequence of the choice of $M$. 
\end{proof}

Let $G'=(V,E')$ 
denote $k$-graph which consists of the clique with vertex set 
$\bigcup M\cup U$ and isolated vertices. 
Clearly, the size of the largest matching in $G'$ 
is $s$. We shall show that $G'$ has more edges than $G$ provided $|M'|>0$.
Thus, we must have $M'=\emptyset$ and the assertion follows.

In order to show that $e(G')>e(G)$ we need to introduce one more 
hypergraph. Let $H=(V\setminus U,F)$ be the hypergraph with the edge set 
$$F=\{e\cap (V\setminus U)\;\colon\; e\in E\}\,.$$
Note that $H$ is not a $k$-graph but each of its edges 
has size between $1$ and $k$. 
We call an edge $f\in F$ with $\ell$ elements {\em thick} if it
is contained in more than $3\eps k^2\binom {|U|}{k-\ell}$ edges of~$G$, 
contained entirely in $U\cup f$, and {\em thin} otherwise.
Let us make an observation somewhat analogous to Claim~\ref{cl1}. 

\begin{claim}\label{cl3}
If  an edge $f\in F$ of $\ell$-elements is thick, then 
each $k$-element subset of $U\cup f$ containing $f$ is an edge of $G$. 
\end{claim}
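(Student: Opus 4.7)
The plan is a proof by contradiction that mirrors Claim~\ref{cl1}. Suppose $X_0 \subseteq U$ with $|X_0| = k - \ell$ and $e_0 := f \cup X_0 \notin E$. Since $G \in \cM_k(n,s)$, the graph $G \cup \{e_0\}$ admits a matching of size $s+1$, which necessarily contains $e_0$. The task is to build a matching of size $s+1$ entirely in $G$, contradicting $\mu(G) = s$.

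The idea is to replace $e_0$ by a thick extension $e' := f \cup X \in E$ (with $X \subseteq U$, $|X| = k - \ell$) and to use the completeness of $G[U]$ to repair the matching $M$ around this insertion. Among the more than $3\eps k^2 \binom{|U|}{k-\ell}$ thick candidates, I would pick one with $X \subseteq U \setminus \bigcup M'$; such an $X$ exists because the $(k-\ell)$-subsets of $U$ meeting $\bigcup M'$ form at most a $3\eps k(k-\ell)$-fraction of $\binom{|U|}{k-\ell}$ (using $|\bigcup M'| \leq k|M'| \leq 2\eps k^2 s$ and $|U| \geq (1-\eps)ks$), leaving at least $3\eps k\ell \binom{|U|}{k-\ell} \geq 1$ thick $X$ in $U \setminus \bigcup M'$ for $\ell \geq 1$ and $n$ large. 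For this $X$, every edge of $M$ meeting $X$ must lie in $M \setminus M' \subseteq U$; let $R$ denote this set of edges, so $|R| \leq k - \ell$. Assume for the moment that $f$ also avoids $\bigcup M'$. Removing $R$ from $M$ and adding $f \cup X$ gives a partial matching of size $s - |R| + 1$, leaving $(k-1) + k|R| - (k-\ell) = k|R| + \ell - 1$ free $U$-vertices, where the $k-1$ initial free vertices come from Claim~\ref{cl2}(i). Since $\ell \geq 1$, these free $U$-vertices accommodate $|R|$ new disjoint $k$-subsets of $U$, which are edges of $G$ by completeness of $G[U]$; altogether this produces the desired matching of size $s+1$ in $G$.

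The main obstacle is the case where $f$ itself meets $\bigcup M'$. Any extension $e' = f \cup X$ then collides with the $M'$-edges that contain vertices of $f$, and these must also be removed from the matching. The accounting of freed $U$-vertices versus $U$-vertices needed for the new edges then becomes tight, and in the extremal configuration where each of the $\ell$ vertices of $f$ lies in a distinct $M'$-edge whose other $k-1$ vertices are in $U$, we fall short by exactly one $U$-vertex. Closing this gap---presumably by sacrificing one extra edge of $M \setminus M'$ to free $k$ more $U$-vertices, or by a structural argument ruling out this extremal configuration when $f$ is thick---is where I expect the most delicate technical work to occur, and the constant $3\eps k^2$ in the definition of thick appears calibrated precisely so that this accounting ultimately closes.
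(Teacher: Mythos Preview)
Your proposal has a genuine gap, and the ``easy case'' you treat first is in fact vacuous. By Claim~\ref{cl2}(iii), every edge of $G$ not contained in $U$ intersects $\bigcup M'$. Hence if $f$ avoided $\bigcup M'$, then for every $X\subseteq U$ with $f\cup X\in E$ we would need $X\cap\bigcup M'\neq\emptyset$; but your own count shows there are fewer than $3\eps k(k-\ell)\binom{|U|}{k-\ell}<3\eps k^2\binom{|U|}{k-\ell}$ such $X$, so $f$ could not be thick. Thus every thick $f$ meets $\bigcup M'$, and you are always in the ``hard case'' that you leave unresolved. (This is also why your easy-case argument never invokes the hypothesis $e_0\notin E$: it is really a proof that the easy case cannot occur.)

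The paper avoids this obstacle entirely by working with the matching $M''$ of size $s+1$ in $G\cup\{e_0\}$ rather than with the fixed $M$. The point is that $f\subseteq e_0\in M''$, so $f$ is automatically covered by $M''$ and becomes free once $e_0$ is removed; there is no separate ``$f$ meets $\bigcup M'$'' issue. One chooses $h\subseteq U$ with $f\cup h\in E$ so that $h$ avoids all $M''$-edges not contained in $U\cup f$; the number of $(k-\ell)$-subsets of $U$ hitting such edges is at most $2\eps k^2 s\binom{|U|}{k-\ell-1}\le 3\eps k^2\binom{|U|}{k-\ell}$, which is exactly what the thickness threshold is calibrated for. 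After swapping $e_0$ for $f\cup h$, the only possible conflicts are with $M''$-edges lying entirely inside $U$, and those can be rematched using the completeness of $G[U]$ without any loss. The switch from $M$ to $M''$ is the missing idea.
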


\begin{proof} Let us suppose that for thick $f$   
there exists an $k$-element set $e$ such that $f\subseteq e\subseteq U\cup f$ and
 $e\notin E$. 
Then, since $G\in \cM_k(n,s)$, the graph $G\cup e$ contains a matching $M''$ 
of size $s+1$, where $e\in M''$. 
Furthermore, at most $2\eps k^2s \binom{|U|}{k-l-1}\leq 3\eps k^2\binom{|U|}{k-l}$
of $(k-\ell)$-elements subsets of $U$ are covered by sets from $M''$
not contained in $U\cup f$.
Since $f$ is thick,
there exists a $(k-\ell)$-subset $h$ of $U$ which is covered
only by edges of $M''$ contained in $U$ and such that $f\cup h\in E$.
But then, one can modify $M'' \setminus  \{e\}\cup \{f\cup h\}$,
replacing edges of $M''$ which intersect $h$ by the same number of disjoint edges contained in $U$,
in such a way that the new set of edges is a matching of size $s+1$, 
contradicting the fact that $G\in \cM_k(n,s)$. 
Hence, all edges $e$ for which $f\subseteq e\subseteq U\cup f$
must already belong to $G$.
\end{proof}

Let us count edges in $|E'\setminus E|$. For every edge
$e\in M'$ consider a~vertex $v\in e\setminus U$. 
Note that $G'$ contains all $k$-element sets $e\subseteq \{v\}\cup U$,
such that $v\in e$.
Furthermore, from Claim~\ref{cl3} 
and the fact that  $U$ is the vertex set of the largest clique,
we infer that at most $3\eps k^2\binom {|U|}{k-1}$ of these sets  belong to~$G$.
Thus, 

\begin{equation}\label{eq5}
|E'\setminus E|\ge (1-3\eps k^2)|M'|\binom {|U|}{k-1}\,.
\end{equation}

Now we estimate the number of edges in $|E\setminus E'|$. 
Let us first bound the number $\gamma$ of edges $e\in E\setminus E'$ 
such that $e\cap (V\setminus U)\neq\emptyset$ is thin. Since, as we have already 
mentioned, each such edge must intersect one of $2\eps s k$ edges of $M'$,
we have 
\begin{equation}\label{eq6}
\gamma\le |M'|k \sum_{r=0}^{k-1} 3\eps k^2 \binom {n}{r}\le 3\eps |M'|k^4 \binom n{k-1}\,.
\end{equation} 
Finally, let us consider a hypergraph $H'=(W',F')$ such that $W'=(V\setminus U)\cup\bigcup M'$,
and 
$$F'=M'\cup \{e\cap W'\;\colon\; e\cap (V\setminus U)\textrm{\ is thick}\}\,.$$
It is easy to see that if the largest matching in $H'$ covers at least $k|M'|+1$ 
vertices than we can enlarge it to a matching in $G$ of size $s+1$ using 
Claims~\ref{cl2}(i) and~\ref{cl3}. 
Furthermore, if $\eps $ is small enough, 
 $|M'|\le |W'|/(2k^3)$ so one can apply the result of 
 Bollob\'as, Daykin, Erd\H{o}s~\cite{BDE} (see \ref{eq:bde1}) to infer that
\begin{equation}\label{eq7}
\begin{aligned}
|F'|&\le |M'|\bigg(\binom n{k-1}-\binom {|U|}{k-1}\bigg)+k|M'|\binom n{k-2}\\
&\le (1+3\eps k^2) |M'| \bigg(\binom n{k-1}-\binom {|U|}{k-1}\bigg)\,.
\end{aligned}
\end{equation} 
  
Thus, from (\ref{eq5}), (\ref{eq6}), and (\ref{eq7}), we get
\begin{align*}
e(G')-e(G)&\ge  (1-3\eps k^2)|M'|\binom {|U|}{k-1}- 3\eps |M'|k^4 \binom n{k-1}\\
&\quad\quad\quad\quad-(1+3\eps k^2) |M'| \bigg(\binom n{k-1}-\binom {|U|}{k-1}\bigg)\\
&\ge |M'| \bigg(2\binom {|U|}{k-1}-\binom n{k-1}-4\eps k^4 \binom n{k-1}\bigg)\,.
\end{align*} 
Due to (\ref{eq:alpha}) we may assume that $|U|/n\ge 1-1/(2k)$ 
and so 
$$\binom{|U|}{k-1}\ge 0.6\binom {n}{k-1}\,.$$
Consequently, for $|M'|>0$ we have $e(G')>e(G)$ and the assertion follows. 
\end{proof}

\section{Shifted graphs}

%One of the standard technique in extremal graph theory is 
%to apply first some kind of `compression'  to a graph 
%and consider  only such compressed graphs whose structure 
%is usually much easier to study. In this paper we also 
%apply this method following a number of other authors 
%(e.g. \cite{FRR}). 
Let $G=(V,E)$, $V\subseteq \mathbb{N}$
be a $k$-graph. For vertices $i<j$, the graph $\sh_{ij}(G)$, 
called the $(i,j)$-shift of $G$, is obtained from $G$ 
by replacing each edge $e\in E$, such that $j\in e$,  
$i\notin e$, and $f=e-\{j\}\cup \{i\}\notin E$, by $f$.
The basic fact we shall use about $\sh_{ij}$ is that it acts nicely on families
$\cM_k(n,s)$, $\Cov_k(n,s)$ and $\Cl_k(n,s)$. 
Let us start with the
following well known result (see Frankl~\cite{F}), 
the proof of which we give here for the
completness of the argument.

\begin{lemma}\label{lem:shcm} For every $i,j$, $i<j$, 
if $G\in \cM_k(n,s)$ then $\sh_{ij}(G)\in \cM_k(n,s)$.
\end{lemma}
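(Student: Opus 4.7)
The plan is to establish two things: that $\sh_{ij}(G)$ has exactly $\mu_k(n,s)$ edges, and that its largest matching has size $s$. The first is immediate from the definition of the shift: the operation replaces edges one-for-one, and the replacement $f$ is only performed when $f \notin E$, so distinct original edges $e$ produce distinct edges in $\sh_{ij}(G)$. Hence $e(\sh_{ij}(G)) = e(G)$. The substance of the proof is showing that $\mu(\sh_{ij}(G)) \le s$, since combined with $e(\sh_{ij}(G)) = \mu_k(n,s)$ this forces membership in $\cM_k(n,s)$ (note $\mu \ge s$ follows automatically since any matching witnessing $\mu(G) = s$, after possibly swapping $j$ for $i$ in one edge, remains a matching in $\sh_{ij}(G)$).

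I would argue by contradiction: suppose $M'$ is a matching of size $s+1$ in $\sh_{ij}(G)$ and produce a matching of the same size in $G$. First I catalog which edges of $\sh_{ij}(G)$ lie in $E$: any edge not containing $i$ is automatically in $E$ (since shifted edges must contain $i$), and any edge containing $j$ but not $i$ is in $E$ together with its partner $f - \{j\} \cup \{i\}$, because otherwise $f$ would have been replaced during the shift. So the only possible ``new'' edge in $M'$ is the unique edge $g_0 \in M'$ containing $i$ (if it exists), in which case $g_0 \notin E$ and $g_0^* = g_0 - \{i\} \cup \{j\} \in E$.

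The case analysis then splits cleanly. If $M'$ has no new edge, then $M' \subseteq E$ and we are done. Otherwise let $g_0$ be the new edge. If no edge of $M'$ contains $j$, then $M' - \{g_0\} + \{g_0^*\}$ is a matching in $G$ of size $s+1$, since $g_0^*$ agrees with $g_0$ off of $\{i,j\}$ and $j$ is unused by the rest of $M'$. If instead some $g_1 \in M'$ contains $j$, then by the cataloging above both $g_1$ and $g_1^* = g_1 - \{j\} \cup \{i\} $ are in $E$. A quick check shows that $g_0^*$ and $g_1^*$ are disjoint from each other (their symmetric differences cancel on $\{i,j\}$ and they were disjoint off of $\{i,j\}$), and each is disjoint from every other $h \in M'$ since $h$ avoids both $i$ and $j$. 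Thus $(M' \setminus \{g_0, g_1\}) \cup \{g_0^*, g_1^*\}$ is a matching of size $s+1$ in $G$, contradicting $G \in \cM_k(n,s)$.

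I do not expect any real obstacle here; the only mildly delicate point is the double swap in the last case, where one must confirm that $g_1^* \in E$ (guaranteed precisely because $g_1$ survived the shift) and that the two swapped-in edges do not collide. The rest is bookkeeping.
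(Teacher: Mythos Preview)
Your argument is essentially the paper's: both show that any matching in $\sh_{ij}(G)$ can be converted to a matching of the same size in $G$ via the same case split (no new edge; new edge through $i$ with $j$ unused; new edge through $i$ with $j$ used elsewhere, then double-swap), and both then use $G\in\cM_k(n,s)$ together with $e(\sh_{ij}(G))=e(G)$ to finish. Your write-up is in fact more careful than the paper's, which omits several of the verifications you spell out.

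There is one slip, however. The parenthetical claim that ``any matching witnessing $\mu(G)=s$, after possibly swapping $j$ for $i$ in one edge, remains a matching in $\sh_{ij}(G)$'' is not correct. If $M$ contains an edge $e\ni j$ with $e-\{j\}\cup\{i\}\notin E$ \emph{and} another edge $e'\ni i$, then replacing $e$ by $e-\{j\}\cup\{i\}$ creates a collision with $e'$ at $i$, while replacing $e'$ by $e'-\{i\}\cup\{j\}$ need not yield an edge of $\sh_{ij}(G)$ (that would require $e'-\{i\}\cup\{j\}\in E$, which you have no control over). Concretely, for $E=\{\{1,3,4\},\{2,5,6\}\}$ the shift $\sh_{12}$ drops the matching number from $2$ to $1$, so the $\ge$ direction genuinely fails for arbitrary $G$. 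The correct reason $\mu(\sh_{ij}(G))\ge s$ holds here is the one both you and the paper already invoke elsewhere: $e(\sh_{ij}(G))=\mu_k(n,s)$ together with the (easy) strict monotonicity of $\mu_k(n,\cdot)$ rules out $\mu(\sh_{ij}(G))<s$. Drop the parenthetical and your proof is complete.
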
  
 
\begin{proof}
Let us first observe that the shift operation can only decrease the size
of the largest matching. Indeed, let us assume that $M=\{e_1,\dots,e_\ell\}$
is a matching in $\sh_{ij}(G)$ but not in $G$,
and let $i\in e_1$. Then either $j\notin \bigcup_r e_r$
and so $M'=\{e_1-\{i\}\cup \{j\}, e_2,\dots,e_\ell\}$ is a matching in $G$, or 
$j\in e_2$ and then $M''=\{e_1-\{i\}\cup \{j\}, e_2-\{j\}\cup \{i\},e_3,\dots,e_\ell\}$
is a matching in $G$. Hence $\mu(\sh_{ij}(G))\le \mu(G)$ but since 
$G\in \cM_k(n,s)$ we have also $\mu(\sh_{ij}(G))=\mu(G)$.
\end{proof} 

The following simple observation will be useful in our further argument.

\begin{fact}\label{fact1}
Let $n\ge 2k-1$. If we color all $(k-1)$-element subsets of $\{1,2,\dots,n\}$
with two colors, then either we find two disjoint sets colored with different colors
or all the sets are of the same color.\hfil\qed
\end{fact}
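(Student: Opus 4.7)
The plan is to prove the contrapositive: assume that no two disjoint $(k-1)$-subsets of $\{1,\dots,n\}$ receive different colors (equivalently, disjoint subsets are always colored the same), and show the coloring is constant. Said differently, I want to show that, under this hypothesis, the coloring is an invariant of the connected components of the Kneser-type graph whose vertices are the $(k-1)$-subsets of $[n]$ with edges between disjoint pairs, and then show that this graph has only one component when $n\ge 2k-1$.

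The key step is a single-swap lemma: if $A$ is a $(k-1)$-subset and $A'=(A\setminus\{a\})\cup\{x\}$ for some $a\in A$ and $x\in\{1,\dots,n\}\setminus A$, then $A$ and $A'$ must have the same color. To see this, note that $|A\cup A'|\le k$, and the hypothesis $n\ge 2k-1$ yields
\[
\bigl|\{1,\dots,n\}\setminus(A\cup A')\bigr|\ge n-k\ge k-1,
\]
so we may choose a $(k-1)$-subset $C\subseteq\{1,\dots,n\}\setminus(A\cup A')$. This $C$ is disjoint from both $A$ and $A'$, and by hypothesis $A$ has the same color as $C$ and $A'$ has the same color as $C$, hence $A$ and $A'$ share a color.

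To finish, observe that any two $(k-1)$-subsets of $\{1,\dots,n\}$ can be transformed one into the other by a sequence of single-element exchanges of the form above (swap one element of the symmetric difference at a time). Iterating the swap lemma along such a sequence forces all $(k-1)$-subsets to carry the same color, which is the desired conclusion. I do not anticipate any real obstacle; the only arithmetic to check is $n-k\ge k-1$, which is precisely the assumption $n\ge 2k-1$ and is in fact sharp—for $n=2k-2$ one can 2-color each $(k-1)$-subset by whether it contains a fixed element, and no two disjoint subsets end up differently colored.
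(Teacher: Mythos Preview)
Your proof is correct. The paper itself gives no argument for this fact at all---it is stated with an immediate \qedsymbol---so there is nothing to compare against; your swap-lemma/Kneser-connectivity argument is a clean and standard way to justify it.

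One small quibble unrelated to the proof proper: your sharpness example at $n=2k-2$ is wrong as stated. If you color a $(k-1)$-set by whether it contains a fixed element, then any set $A$ and its complement $[n]\setminus A$ are disjoint $(k-1)$-sets of opposite colors. What does work for $k\ge 3$ is any coloring that assigns each complementary pair $\{A,[n]\setminus A\}$ a single color but is not constant across pairs; since at $n=2k-2$ the only disjoint pairs are complementary, this gives no heterochromatic disjoint pair while using both colors.
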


In order to characterize the extremal graphs in $\cM_k(n,s)$
we shall use the following observation.

\begin{lemma}\label{lem:ext}
Let $n\neq 2k$, $G\in \cM_k(n,s)$, and $i<j$. 
\begin{enumerate}
\item[(i)] If $\sh_{ij}(G)\in \Cov_k(n,s)$, then  $G\in \Cov_k(n,s)$.
\item[(ii)] If $\sh_{ij}(G)\in \Cl_k(n,s)$, then  $G\in \Cl_k(n,s)$.
\end{enumerate}
\end{lemma}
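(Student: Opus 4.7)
The plan is to analyse, for each pair $(e,f)$ of $k$-sets with $j\in e$, $i\notin e$, and $f=e\setminus\{j\}\cup\{i\}$, the four possible memberships of $e$ and $f$ in $G$, and to play two descriptions of $\sh:=\sh_{ij}(G)$ against one another. By the definition of the shift, $e\in\sh$ if and only if both $e,f\in G$ (``Case~1''), and $f\in\sh$ if and only if at least one of $e,f$ lies in $G$ (``not Case~4''); on the other hand, the hypothesis that $\sh$ belongs to $\Cov_k(n,s)$ (with cover $S$) or $\Cl_k(n,s)$ (with clique $T$) describes the edges of $\sh$ explicitly. Matching the two descriptions on each pair pins $G$ down to a small amount of freedom and reduces the argument to a case analysis on how $\{i,j\}$ meets $S$ (respectively~$T$).

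For (i), if $\{i,j\}\subseteq S$, $\{i,j\}\cap S=\emptyset$, or $j\in S$ with $i\notin S$, the two descriptions either force $G=\sh$ on every pair or, in the last subcase, produce a direct contradiction on pairs with $g:=e\setminus\{j\}$ avoiding $S$ (such pairs exist once $n\ge s+k$). The main subcase is $i\in S$, $j\notin S$: every pair with $g\cap S\neq\emptyset$ is forced into Case~1, while the \emph{free} pairs, those with $g\subseteq W\setminus\{j\}$ for $W:=V\setminus S$, can independently be Case~2 (so $e\in G$) or Case~3 (so $f\in G$). Colour each free $g$ red or blue accordingly. If all free $g$ are blue then $G=\sh\in\Cov_k(n,s)$; if all are red, a direct check shows that $S':=S\setminus\{i\}\cup\{j\}$ covers every edge of $G$, so again $G\in\Cov_k(n,s)$. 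In the mixed case I would apply Fact~\ref{fact1} to the $(k-1)$-subsets of the $(n-s-1)$-vertex set $W\setminus\{j\}$ to produce two disjoint free sets $g_1$ (red) and $g_2$ (blue); then $e_1:=g_1\cup\{j\}$ and $f_2:=g_2\cup\{i\}$ are two disjoint edges of $G$, and I would extend them by $s-1$ further pairwise-disjoint edges of the form $\{v_\ell\}\cup h_\ell$, with $v_\ell$ ranging over $S\setminus\{i\}$ and $h_\ell$ a fresh $(k-1)$-subset of $W\setminus\{j\}$, into a matching of size $s+1$, contradicting $G\in\cM_k(n,s)$. Part~(ii) follows the same scheme with $T$ in place of $S$: the only nontrivial subcase is $i\in T$, $j\notin T$, and the mixed case is defeated by extending $e_1,f_2$ with $s-1$ disjoint $k$-subsets of the $k(s-1)$-vertex remainder $T\setminus(g_1\cup g_2\cup\{i\})$, each of which lies in $G$ because $G$ contains every $k$-subset of $T\setminus\{i\}$ in that subcase.

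The main obstacle is the applicability of Fact~\ref{fact1}, which needs at least $2k-1$ vertices. For (i) this is comfortable because $n-s-1\ge 2k-1$ holds throughout the regime in which $\Cov_k(n,s)$ is actually extremal, and the matching-extension step needs only the mild bound $n\ge k(s+1)$, which holds in the same regime. The delicate point is (ii) at $s=1$, where $|T\setminus\{i\}|=2k-2$ falls just one short of what Fact~\ref{fact1} demands; this is precisely where the hypothesis $n\ne 2k$ matters. For $n=2k-1$ one has $T=V$, so the subcase $j\notin T$ cannot even arise; for $n\ge 2k+1$ a direct edge-count gives $|\sh|=\binom{2k-1}{k}<\binom{n-1}{k-1}=\mu_k(n,1)$, so the premise $\sh\in\Cl_k(n,1)$ is itself vacuous. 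With these edge cases disposed of, the argument goes through uniformly.
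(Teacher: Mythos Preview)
Your argument is correct and follows essentially the same route as the paper: reduce to the subcase $i\in S$, $j\notin S$ (respectively $i\in T$, $j\notin T$), two-colour the relevant $(k-1)$-sets, invoke Fact~\ref{fact1}, and in the mixed case extend the two disjoint edges $g_1\cup\{j\}$ and $g_2\cup\{i\}$ to a matching of size $s+1$. The one noteworthy difference is that you colour only the \emph{free} $(k-1)$-sets (those lying in $W\setminus\{j\}$, respectively in $T\setminus\{i\}$), whereas the paper colours all $(k-1)$-subsets of $V\setminus\{i,j\}$; your choice makes the colouring an honest $2$-colouring and lets you write the matching-extension step cleanly and explicitly, at the cost of needing the slightly sharper inequality $n-s-1\ge 2k-1$ (rather than $n-2\ge 2k-1$) for Fact~\ref{fact1} to apply. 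You do cover this: once $\sh_{ij}(G)\in\Cov_k(n,s)\cap\cM_k(n,s)$ and $\Cov_k(n,s)$ is not the full $k$-graph, one has $n\ge k(s+1)$, hence $n-s-1\ge (k-1)(s+1)\ge 2k-2$, with equality only at $s=1$, $n=2k$, which is excluded; it would strengthen the write-up to state this one-line computation rather than appealing to ``the regime in which $\Cov_k(n,s)$ is extremal''. Your treatment of the $s=1$ boundary in~(ii) via the edge-count $\binom{2k-1}{k}<\binom{n-1}{k-1}$ for $n\ge 2k+1$ is exactly the right way to use the hypothesis $n\neq 2k$.
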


\begin{proof} Let us remark first that
 if $n\le sk+k-1$,  then 
the only graph in $\cM_k(n,s)$ is the complete graph, and for $s=1$ and $n\ge 2k+1$
we have $\cM_k(n,1)=\Cov_k(n,1)$ by the extremal version of Erd\H{o}s-Ko-Rado theorem
(cf. \cite{BDE}), so we may assume that $s\ge 2$ and $n\ge 2k+1$.  
Thus, let $G=(V,E)\in \cM_k(n,s)$, $\sh_{ij}(G)\in \Cov_k(n,s)$
and let $S$ be the set which covers all edges of $\sh_{ij}(G)$. Clearly, $i\in S$.
If $j\in S$ then
$G=\sh_{ij}(G)$ so let us assume that $j\notin S$. Let us color all $(k-1)$-element subsets $f$
of $V\setminus \{i,j\}$ into two colors: red if $\{i\}\cup f\in E$ and blue if $\{j\}\cup f\in E$.
Since $S$ covers all $k$-element subsets of $V$ in $\sh_{ij}(G)$, 
each such $(k-1)$-element subsets is colored with 
exactly one color. Furthermore, if for a pair of disjoint subsets $f'$ and $f''$,
$f'$ is red and $f''$ is blue, then the edges $\{i\}\cup f'$ and $\{j\}\cup f''$ can be completed
to a matching of size $s+1$, contradicting the fact that $G\in \cM_k(n,s)$. Thus, 
by Fact~\ref{fact1}, all such sets are colored with one color and either $S$ or $S-\{i\}\cup \{j\}$
is covering all edges of $G$, i.e. $G\in \Cov_k(n,s)$. 

The proof of (ii) is very similar. We take a clique $T$ in $\sh_{ij}(G)$, $|T|=ks+k-1$, 
and observe that the only interesting case is when $i\in T$ and $j\notin T$. Then we 
color all $(k-1)$-subsets of $T$ with two colors and use Fact~\ref{fact1} 
to argue that either $T$ or $T-\{i\}\cup \{j\}$ is the clique in~$G$.
\end{proof}

Now let us define $\Sh(G)$ as a graph which is obtained from $G$ by the series of 
shifts and which is invariant under all possible shifts. Although we shall never use this fact 
it is worthy to remark that $\Sh(G)$ is uniquely determined, i.e. 
if we apply to $G$ all possible shifts 
then the resulting graph does not depend on the order the operations (see \cite{F}).
Let us state now an immediate consequence of  Lemmata~\ref{lem:shcm} and~\ref{lem:ext} 
we use directly in our proof. 

\begin{lemma}\label{lem:sh}\ 
\begin{enumerate}
\item[(i)] If $G\in \cM_k(n,s)$ then $\Sh(G)\in \cM_k(n,s)$.
\item[(ii)] If $n\neq 2k$, $G\in \cM_k(n,s)$,  
and $\Sh(G)\in \Cov_k(n,s)$, then $G\in \Cov_k(n,s)$.
\item[(iii)] If $n\neq 2k$, $G\in \cM_k(n,s)$, and   
$\Sh(G)\in \Cl_k(n,s)$, then $G\in \Cl_k(n,s)$.\qed
\end{enumerate}
\end{lemma}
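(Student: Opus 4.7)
The statement of Lemma~\ref{lem:sh} is essentially a bookkeeping consequence of the two preceding lemmata, so my plan is to reduce everything to an induction on the number of shifts used to construct $\Sh(G)$. Recall that $\Sh(G)$ is defined as the result of iterating shifts $\sh_{ij}$ until the graph becomes invariant under every such operation. This process terminates in finitely many steps because a potential function such as $\sum_{e\in E(H)}\sum_{v\in e} v$ strictly decreases with every effective shift (it is a nonnegative integer, bounded below). So there is a finite sequence
$$G=G_0,\ G_1=\sh_{i_1 j_1}(G_0),\ G_2=\sh_{i_2 j_2}(G_1),\ \dots,\ G_\ell=\Sh(G).$$

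For part (i) I would argue by induction on $\ell$. The base case $\ell=0$ is trivial, and the inductive step is immediate from Lemma~\ref{lem:shcm}: if $G_{t-1}\in\cM_k(n,s)$ then $G_t=\sh_{i_t j_t}(G_{t-1})\in\cM_k(n,s)$ as well. In particular this shows that \emph{every} intermediate graph $G_t$ in the shift sequence lies in $\cM_k(n,s)$, a fact that is needed in order to apply Lemma~\ref{lem:ext} at each intermediate step.

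For parts (ii) and (iii) I would run the same sequence backwards. Suppose $\Sh(G)=G_\ell\in\Cov_k(n,s)$ (the argument for $\Cl_k$ is identical). By part (i) we already know $G_{\ell-1}\in\cM_k(n,s)$. Since $\sh_{i_\ell j_\ell}(G_{\ell-1})=G_\ell\in\Cov_k(n,s)$ and $n\neq 2k$, Lemma~\ref{lem:ext}(i) yields $G_{\ell-1}\in\Cov_k(n,s)$. Iterating this step $\ell$ times we conclude $G_0=G\in\Cov_k(n,s)$. The analogous backward induction using Lemma~\ref{lem:ext}(ii) proves (iii).

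There is really no hard step here: the only subtlety is ensuring that at every moment when we invoke Lemma~\ref{lem:ext} we still have an $\cM_k(n,s)$-graph, which is precisely what (i) guarantees, and that the shift sequence is finite, which follows from the monovariant mentioned above. The hypothesis $n\neq 2k$ is carried along because it is the standing assumption of Lemma~\ref{lem:ext}, and it is needed to exclude the pathological small case flagged after Theorem~\ref{thm:main}.
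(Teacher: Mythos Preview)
Your proposal is correct and matches the paper's approach exactly: the paper presents this lemma as an immediate consequence of Lemmata~\ref{lem:shcm} and~\ref{lem:ext} and gives no further argument beyond the \qed box. Your write-up simply spells out the obvious induction along the shift sequence (including the termination argument and the observation that part~(i) is needed to keep each intermediate graph in $\cM_k(n,s)$ when applying Lemma~\ref{lem:ext}), which is precisely what the paper leaves implicit.
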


\section{Proof of Theorem~\ref{thm:main}}

In this section we study the case when $k=3$. The main result 
of this part of the paper can be stated as follows.

\begin{lemma}\label{lem:main}
For every $\eps>0$ there exists $n_0$ such that for every $n\ge n_0$,  $1\le s\le n/3$, 
and $G\in \cM_3(n,s)$ we have 
$$\Sh(G)\in \Cov_3(n,s;\eps)\cup \Cl_3(n,s;\eps)\,.$$
\end{lemma}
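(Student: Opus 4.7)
The plan is first to apply Lemma~\ref{lem:sh}(i) and pass to the fully shifted graph $\hat G = \Sh(G) \in \cM_3(n,s)$; we may take $V(\hat G) = [n]$ and use that $E(\hat G)$ is closed under componentwise shifting. Two consequences of shifting together with $\mu(\hat G) = s$ serve as workhorses. First, the canonical matching
\[
M_0 = \{\{1,2,3\},\,\{4,5,6\},\,\ldots,\,\{3s-2,\,3s-1,\,3s\}\}
\]
is contained in $E(\hat G)$; this is Frankl's standard lemma for shifted $k$-graphs with matching number $s$. Second, every edge of $\hat G$ meets $[3s]$: otherwise componentwise shifting produces $\{3s+1,3s+2,3s+3\} \in E(\hat G)$, and adjoining this to $M_0$ gives a matching of size $s+1$.

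Define the initial clique size $u = u(\hat G) = \max\{u' \ge 3 : \binom{[u']}{3} \subseteq E(\hat G)\}$; the canonical matching bound gives $3 \le u \le 3s+2$. The dichotomy is on $u$. In the clique-dominated case $u \ge (1-\eps)\cdot 3s$, the set $[u]$ spans a complete subhypergraph on at least $(1-\eps)\cdot 3s$ vertices, so $\hat G \in \Cl_3(n,s;\eps)$ and we are done. In the cover-dominated case $u < (1-\eps)\cdot 3s$, by the definition of $u$ there is a missing triple $\{i,j,u+1\} \notin E(\hat G)$ with $i < j \le u$; componentwise shifting then forces $\{i',j',v\} \notin E(\hat G)$ for every $i' \ge i$, $j' \ge j$, $v \ge u+1$, yielding a large ``missing cone'' of non-edges that drives the Cov-type conclusion.

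In the cover-dominated case the goal is to show $\hat G \in \Cov_3(n,s;\eps)$, i.e., $e(\hat G') \le \eps \cdot e(\hat G)$ for the sub-hypergraph $\hat G' = \hat G[V \setminus [s]]$ of edges avoiding $[s]$. I argue by contradiction, assuming $e(\hat G') > \eps \cdot e(\hat G)$, setting $\sigma = \mu(\hat G')$, and applying the canonical matching lemma inside the shifted $\hat G'$ on $[s+1,n]$ to obtain
\[
N = \{\{s+1,s+2,s+3\},\,\ldots,\,\{s+3\sigma-2,\,s+3\sigma-1,\,s+3\sigma\}\} \subseteq E(\hat G).
\]
The trivial bound $e(\hat G') \le 3\sigma\binom{n-s-1}{2}$ (every edge of $\hat G'$ meets the $3\sigma$ vertices of any maximum matching in $\hat G'$), combined with $e(\hat G) \ge \binom{n}{3} - \binom{n-s}{3} = \Omega(sn^2)$ from the Cov construction, forces $\sigma = \Omega(\eps s)$, so $\sigma$ is a nontrivial fraction of $s$.

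The main obstacle, and the technical heart of the proof, is to extend $N$ to a matching of size $s+1$ in $\hat G$ by adjoining $s+1-\sigma$ further disjoint edges, each of the form $\{i, x_i, y_i\}$ with $i \in [s]$ and $x_i, y_i \in [s+3\sigma+1, n]$. Feasibility requires the link of each $i \in [s]$ inside $[s+3\sigma+1, n]$ to contain enough disjoint pairs for a greedy selection of these $s+1-\sigma$ disjoint edges; this density is forced by combining extremality of $\hat G$ with the missing cone. In the spirit of Claim~\ref{cl1} from the proof of Lemma~\ref{lem:types}, any vertex of $\hat G$ whose degree passes a suitable threshold attains the full degree $\binom{n-1}{2}$, while the missing cone prevents $\hat G$ from compensating for the surplus $e(\hat G') > \eps e(\hat G)$ by packing the extra edges into the $[3s+2]$-clique; consequently the extra edges must reach deeply into $[s+3\sigma+1, n]$ via the links of vertices in $[s]$. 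Assembling the resulting $s+1-\sigma$ disjoint edges with $N$ produces a matching of size $s+1$, contradicting $\mu(\hat G) = s$. The delicate point is to calibrate $\sigma$ so it is simultaneously large enough to account for $e(\hat G') > \eps e(\hat G)$ yet small enough that $[s+3\sigma+1, n]$ still accommodates $s+1-\sigma$ disjoint pairs in each link, which relies crucially on $n$ being large compared to $s/\eps$.
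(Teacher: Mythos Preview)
Your proposal has a genuine gap in the cover-dominated branch. The extension step --- adjoining $s+1-\sigma$ pairwise disjoint edges $\{i,x_i,y_i\}$ with $i\in[s]$ and $x_i,y_i\in[s+3\sigma+1,n]$ to the canonical matching $N$ --- already requires $2(s+1-\sigma)$ distinct vertices in $[s+3\sigma+1,n]$, i.e.\ $n\ge 3s+\sigma+2$. But you have just argued that $\sigma=\Omega(\eps s)$, while the lemma permits $s$ up to $(n-2)/3$; for such $s$ one has $n-3s=O(1)$ whereas $\sigma=\Omega(\eps n)$, so the vertex budget needed for your greedy extension is simply unavailable. Your own closing remark that the argument ``relies crucially on $n$ being large compared to $s/\eps$'' is precisely the obstruction: the hypothesis gives only $n\ge n_0(\eps)$ with $s$ ranging up to $n/3$, so $n$ cannot be taken large relative to $s$. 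And it is exactly the regime $s=\Theta(n)$ (near the crossover $s\approx a_0 n$ between the two extremal families) where the lemma has content.

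Even when the vertex count is not the issue, the link density you need is asserted rather than proved. The appeal to Claim~\ref{cl1} does not help once $s$ is close to $n/3$: the threshold $\binom{n}{2}-\binom{n-3s-1}{2}$ becomes essentially $\binom{n}{2}$, so no vertex is forced to full degree by that claim. The ``missing cone'' coming from $u<(1-\eps)3s$ does kill many edges with two large coordinates, but you do not show how this translates into the specific statement that each $i\in[s]$ has enough disjoint pairs in its link restricted to $[s+3\sigma+1,n]$; nothing in your sketch rules out that $\hat G$ concentrates its edges so that many vertices of $[s]$ have sparse links into the tail. As written, the cover-dominated case is a plan whose decisive step is missing.

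For comparison, the paper takes a completely different route with no matching-extension argument of this type: it builds an auxiliary hypergraph $H$ on $V(M)$ whose edge classes $F_1,F_2,F_3$ record high degrees into the unsaturated part, classifies triples of matching edges as \emph{good} or \emph{bad}, and through a case analysis (their Claims~\ref{cl12} and~\ref{cl33}) bounds $e(\Sh(G))$ by a linear expression in the $|T_i|$ whose maximum over the simplex is attained at one of two extreme points, corresponding exactly to the $\Cov$ and $\Cl$ configurations. That counting approach handles all $s\le n/3$ uniformly.
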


We shall show Lemma~\ref{lem:main} by a detailed analysis of the structure 
of $\Sh(G)$ but before we do it let us observe that it implies Theorem~\ref{thm:main}.

\begin{proof}[Proof of Theorem~\ref{thm:main}]
Let $G\in \cM_3(n,s)$. Then, by Lemma~\ref{lem:sh}(i), $\Sh(G)\in \cM_3(n,s)$. 
Thus, from Lemmata~\ref{lem:types} and \ref{lem:main}, for $n$ large enough we get
$$\Sh(G)\in  \Cov_3(n,s)\cup \Cl_3(n,s)\,,$$
and so, by Lemma~\ref{lem:sh}(ii),(iii)
$$\hfill G\in \Cov_3(n,s)\cup \Cl_3(n,s)\,.\hfill\qed$$
\renewcommand{\qed}{} 
\end{proof}

Let us remark that in order to show Theorem~\ref{thm:main} it is enough to 
show Lemma~\ref{lem:main} for some given $\eps>0$. % (in fact $\eps=0.01$ would be enough).

\begin{proof}[Proof of Lemma~\ref{lem:main}]
Let $\eps >0$ and $G\in \cM_3(n,s)$. By Lemma~\ref{lem:sh}(i), $\Sh(G)\in \cM_3(n,s)$. 
To simplify the notation, by writing $(i,j,k)$ 
we always mean that an edge $\{i,j,k\}$ is such that $i<j<k$.
Let $M=\{(i_l,j_l,k_l):l=1,\ldots,s\}$ be the largest matching in $\Sh(G)$,
and let partition its vertex set into three parts  $V(M)=I\cup J\cup K$ such that 
for every edge $(i,j,k)\in M$ we have $i\in I$, $j\in J$, and $k\in K$. 
Moreover, let vertices of $K$ be labeled in such a way that $k_l<k_m$ for every $l<m$, 
and denote $L=\{i_l, j_l,k_l\in V(M): l\leq (1-\eps)s\}$.
We shall show that for $n$ large enough either $I$ covers all but at most $\epsilon|E|$ edges of $\Sh(G)$
or $\{e\in \Sh(G):e\subset L\}$ is a clique.
%Thus, $\Sh(G)\in \Cov_3(n,s;\eps)\cup \Cl_3(n,s;\eps)$ and the assertion follows.

In order to study the structure of $\Sh(G)$ we  introduce an auxiliary hypergraph $H$.
Denote by $V'$ the set of vertices which are not saturated by $M$.
Obviously, none of the edges of $\Sh(G)$ is contained in $V'$.
We use $\deg_{V'}(v)$ to denote the number of pairs $u,w\in V'$ 
such that $\{v,u,w\}$ is an edge in $\Sh(G)$. 
Similarly, a number of vertices $w\in V'$ such that $\{v,u,w\}\in \Sh(G)$ is denoted $\deg_{V'}(v,u)$.
Finally, we use $e(v)$ to denote an unique edge of $M$ containing vertex~$v$.
Let  $H=(W,F)$ be a hypergraph with vertices $W=V(M)$ and the edge set $F=M \cup F_1\cup F_2\cup F_3$, where
\begin{eqnarray*}
F_1&=&\{v\in W: \deg_{V'}(v)\geq 20 n\},\\
F_2&=&\{\{v,w\}\in W^{(2)}: e(v)\neq e(w) \text{ and } \deg_{V'}(v,w)\geq 20\},\\
F_3&=&\{\{v,w,u\}\in W^{(3)}: e(v), e(w) \text{ and } e(u) \text{ are pairwise different}\}.
\end{eqnarray*}
Note that since $\Sh(G)$ is shifted, hypergraphs $F_1, F_2, F_3$ are shifted as well.
We shall call an edge $e$ of $\Sh(G)$ \emph{traceable} if $e\cap V(M)\in F$, and 
\emph{untraceable} otherwise.  Observe also that the number of untraceable edges
of $\Sh(G)$ is bounded from above by $31n^2$, so we can afford to ignore them.

We call a triple $T$ of edges from $M$ \emph{bad},
if in $\bigcup T$ there are three disjoint edges of $H$ whose union 
intersects $I$ on at most 2 vertices, and \emph{good} otherwise.
%Since $M$ is the largest matching in $\Sh(G)$, 
We show first that there are only few {bad} triples in~$M$. 

\begin{claim}\label{cl:bad}
No three disjoint triples are bad. 

Consequently, 
there exist at most six edges in the matching $M$ so that 
each bad triple contains one of these edges.
\end{claim}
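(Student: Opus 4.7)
I would argue by contradiction: starting from three pairwise disjoint bad triples $T_1,T_2,T_3\subseteq M$, I would construct a matching of size $s+1$ in $\Sh(G)$, contradicting $\mu(\Sh(G))=s$ (which holds because $\Sh(G)\in\cM_3(n,s)$ by Lemma~\ref{lem:sh}(i)). Granting the main assertion, the ``consequently'' part is immediate: pick any maximal family $\mathcal{F}$ of pairwise disjoint bad triples; then $|\mathcal{F}|\le 2$, so $\bigcup \mathcal{F}$ comprises at most six edges of $M$, and any other bad triple must meet one of these six edges by maximality of $\mathcal{F}$.

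The first step is to replace the nine edges in $T_1\cup T_2\cup T_3$ by nine pairwise disjoint edges of $\Sh(G)$ using at most $2\cdot 3=6$ vertices of~$I$. For each $T_r$ I take the three disjoint $H$-edges $h_{r,1},h_{r,2},h_{r,3}\subseteq\bigcup T_r$ guaranteed by badness and promote each of them to an actual edge of $\Sh(G)$: an element of $M$ already is one; an $F_1$-singleton $v$ extends to $\{v,a,b\}$ with two $V'$-vertices, using $\deg_{V'}(v)\ge 20n$; an $F_2$-pair $\{v,w\}$ extends to $\{v,w,c\}$ with one $V'$-vertex, using $\deg_{V'}(v,w)\ge 20$; an $F_3$-triple is already an edge of $\Sh(G)$ (or can be made one by invoking shift-invariance on it). Since at most $2\cdot 9=18$ extension vertices are needed in total and both degree thresholds exceed $18$, one can choose all these $V'$-vertices pairwise disjoint.

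The second step is to construct a tenth edge of $\Sh(G)$ disjoint from the nine new ones and from the $s-9$ untouched edges of $M$. Because the three $H$-edges of each $T_r$ meet $I\cap\bigcup T_r$ in at most two of its three vertices, at least one $I$-vertex in each $\bigcup T_r$ survives, yielding three freed vertices $v_1,v_2,v_3\in I$ --- the smallest-labeled vertex in each of their respective matching edges. By the shift-invariance of $\Sh(G)$ these vertices enjoy strong extension properties: either $\{v_1,v_2,v_3\}$ is itself an edge of $\Sh(G)$ (shift-invariance drags any edge of $\Sh(G)$ spanning three distinct matching edges down onto its $I$-representatives), or an edge can be produced through some pair $\{v_i,v_j\}$ and a fresh $V'$-vertex, or through some $v_i$ and two fresh $V'$-vertices. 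Adjoining this tenth edge to the nine from Step~1 and the $s-9$ untouched edges of $M$ produces a matching of size $s+1$, the desired contradiction. The main obstacle is this last step --- together with the closely related $F_3$-promotion in Step~1 --- since both rest on carefully exploiting shift-invariance and the extremality of $G$ to rule out a bad triple in which no realization of the $h_{r,j}$ and no extension of $v_1,v_2,v_3$ can be assembled simultaneously.
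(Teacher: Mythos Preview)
Your overall framework and the ``consequently'' deduction are fine, and Step~1 is essentially right (note that $F_3$-edges are, by the intended definition, already edges of $\Sh(G)$, so the ``$F_3$-promotion'' worry is not an issue). The real gap is exactly where you flag it: Step~2. You need $\{v_1,v_2,v_3\}\in E(\Sh(G))$, or some $\{v_i,v_j\}$ to have a $V'$-extension, or some $v_i$ to lie in $F_1$ --- and none of these follows from what you have. Shift-invariance only tells you that \emph{if} some edge of $\Sh(G)$ spans the three matching edges $e(v_1),e(v_2),e(v_3)$, then $\{v_1,v_2,v_3\}$ is also an edge; but those three matching edges come from three \emph{different} bad triples, and nothing you have assumed forces any edge of $\Sh(G)$ (or any $F_1$/$F_2$ edge of $H$) to connect them. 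Appealing to extremality of $G$ does not close this either: adding $\{v_1,v_2,v_3\}$ and invoking $\mu(\Sh(G)\cup\{v_1,v_2,v_3\})=s+1$ gives you an $s$-matching avoiding $v_1,v_2,v_3$, but this new matching need not be compatible with your nine extended $H$-edges.

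The paper circumvents this by using shift-invariance in a different place. Rather than trying to manufacture a tenth edge on the freed $I$-vertices, it \emph{redirects} two of the nine $H$-edges so that they absorb two of the freed $I$-vertices from the other triples, thereby vacating an entire matching edge. Concretely, if the uncovered $I$-vertices are $i_3,i_6,i_9$ (one per triple), one relabels so that $j_9>j_6$ and $k_9>k_3$; then in the $H$-edge containing $j_9$ one replaces $j_9$ by $i_6$, and in the $H$-edge containing $k_9$ one replaces $k_9$ by $i_3$. Both replacements stay in $H$ because $i_6<j_6<j_9$, $i_3<k_3<k_9$, and $F_1,F_2,F_3$ are shifted. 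After this, the nine (still pairwise disjoint) $H$-edges miss all of $i_9,j_9,k_9$, so they live inside eight matching edges; extending them via $V'$ and adding back the untouched matching edge $(i_9,j_9,k_9)$ together with the remaining $s-9$ edges of $M$ yields the $(s+1)$-matching. This shifting-into-the-freed-slots trick is the missing idea in your argument.
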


\begin{proof}
Let us suppose that there exist nine disjoint edges 
$\{(i_l,j_l,k_l):l=1,\ldots,9\}\subset M$ 
such that in $\{i_l,j_l,k_l:l=1,\ldots,9\}$ one can find 
a set of nine disjoint edges $H'\subset H$,
which do not cover vertices $i_3$, $i_6$ and $i_9$.
One can easily see that for any ordering of the sets $\{j_3, j_6, j_9\}$ 
and $\{k_3, k_6, k_9\}$ there exists a permutation $\sigma(3)$, $\sigma(6)$, $\sigma(9)$
such that $j_{\sigma(9)}> j_{\sigma(6)}$ and $k_{\sigma(9)}> k_{\sigma(3)}$; 
to simplify the notation let us assume that $j_9> j_6>i_6$ and $k_9>k_3>i_3$.
Replace in $H'$  an edge $e$ which contains  $j_9$ by $e'=e\setminus \{j_9\}\cup \{i_6\}$
and the edge $f$ containing $k_9$ by $f'=e\setminus \{k_9\}\cup \{i_3\}$; note that both
$e'$ and $f'$ belong to $H$ since $H=\Sh(H)$. 
Thus, we obtain the family of 
nine disjoint edges of $H''\subseteq H$, all of which are contained in 
eight edges of $M$. Furthermore, since edges from $F_1\cup F_2$ 
have large degrees,  all edges from $H''$ which belong to $F_1\cup F_2$ 
can be simultaneously extended to disjoint edges of $\Sh(G)$ by 
adding to them vertices from $V\setminus \bigcup M$. But this would lead to 
a matching  $M'$ of size $s+1$ in $\Sh(G)$ 
contradicting  the assumption $\Sh(G)\in \cM_3(n,s)$.
\end{proof}

%In further part of the proof we show that 
%there are only few \emph{{\bad}} triples in $M$. 
Now we study properties of {good} triples.
We start with the following simple observation.

\begin{claim}\label{cl12}
Let $T$ be a good triple. 
\begin{itemize}
\item[(i)] $\left( F_1\cap \bigcup T\right)\subset I$.
\item[(ii)] For any two edges of $T$ there are 
at most $5$ edges in $F_2$ contained in their vertex set.

Moreover, the only possible configuration with exactly $5$ edges
from $F_2$  
is  when  all these edges intersect $I$ (see Fig.~1).
\end{itemize}
\begin{center}
\includegraphics[scale=0.1]{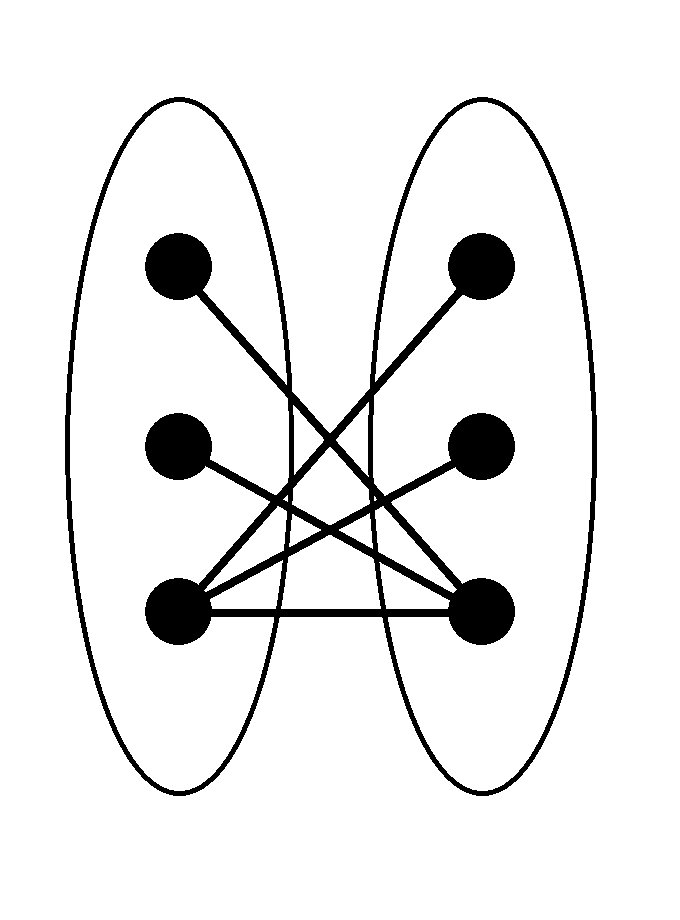} \\ Fig. 1. \\ 
\end{center}
\end{claim}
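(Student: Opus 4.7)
The plan for part (i) is a direct application of the definition of goodness. Take any $v\in F_1\cap \bigcup T$, let $e_v\in T$ be the matching edge containing $v$, and let $e',e''$ denote the other two edges of $T$. Since $v\in F_1$, the singleton $\{v\}$ is an edge of $H$, and because $M$ is a matching, $\{v\},e',e''$ are pairwise disjoint edges of $H$ contained in $\bigcup T$. If $v\notin I$, their union would meet $I$ only in the two $i$-vertices of $e'$ and $e''$, making $T$ bad; hence $v\in I$.

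For part (ii), fix two edges $e_1=(i_1,j_1,k_1)$ and $e_2=(i_2,j_2,k_2)$ of $T$ and let $e_3$ denote the third edge. Since any element of $F_2$ has its endpoints in distinct matching edges, every pair of $F_2$ contained in $e_1\cup e_2$ is one of the nine cross-pairs between $e_1$ and $e_2$. Four of them miss $I$ entirely,
$$p_1=\{j_1,j_2\},\quad p_2=\{j_1,k_2\},\quad p_3=\{k_1,j_2\},\quad p_4=\{k_1,k_2\},$$
and the remaining five contain $i_1$ or $i_2$. The key observation I plan to use is: whenever some $p\in\{p_1,p_2,p_3,p_4\}$ lies in $F_2$ together with some $q\in F_2\cap\binom{e_1\cup e_2}{2}$ disjoint from $p$ and distinct from $\{i_1,i_2\}$, the triple $\{p,q,e_3\}$ consists of three pairwise disjoint edges of $H$ contained in $\bigcup T$, and their union meets $I$ in $\{i_3\}\cup(q\cap I)$ --- at most two vertices, contradicting the goodness of $T$.

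Applied to $p=p_1$, this eliminates the three cross-pairs that are disjoint from $p_1$ and different from $\{i_1,i_2\}$, namely $p_4$, $\{i_1,k_2\}$ and $\{k_1,i_2\}$; a symmetric trio of exclusions is produced by each other $p_i$. A short case-check, splitting on which subset of $\{p_1,p_2,p_3,p_4\}$ lies in $F_2$, then shows that the presence of any single $p_i$ in $F_2$ forces $|F_2\cap\binom{e_1\cup e_2}{2}|\le 4$, because the individual exclusions interlock (for instance, $p_1,p_2\in F_2$ together rule out $p_3,p_4,\{i_1,j_2\},\{i_1,k_2\}$ and $\{k_1,i_2\}$, leaving only the four candidates $p_1,p_2,\{i_1,i_2\},\{j_1,i_2\}$; the remaining configurations, e.g.\ $\{p_1,p_3\}$ or $\{p_1\}$ alone, are similar). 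Otherwise $F_2\cap\binom{e_1\cup e_2}{2}$ is contained in the $5$-element $I$-intersecting family $\{\{i_1,i_2\},\{i_1,j_2\},\{i_1,k_2\},\{j_1,i_2\},\{k_1,i_2\}\}$. In either case the count is at most $5$, and equality is only possible in the latter scenario, i.e.\ all five pairs intersect $I$, matching Fig.~1.

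The main obstacle is entirely bookkeeping: one has to verify that in each of the few symmetric subcases the interlocking exclusions truly bring the count down to four, and that by symmetry under permuting $(p_1,\dots,p_4)$ these subcases cover everything. Beyond the single ``$\{p,q,e_3\}$ bad-configuration'' trick, no new idea enters the argument.
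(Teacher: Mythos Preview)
Your proof is correct, but it proceeds differently from the paper's.

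For (i), the paper invokes the shiftedness of $F_1$: assuming some vertex of $\bigcup T\setminus I$ lies in $F_1$, shiftedness forces the smallest $j$-vertex $j_1$ into $F_1$, and then $\{j_1\},(i_2,j_2,k_2),(i_3,j_3,k_3)$ form a bad configuration. Your argument is actually more direct---you use the offending vertex $v$ itself together with the two other matching edges---and avoids shiftedness altogether.

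For (ii), the paper again leans on shiftedness of $F_2$: if six cross-pairs lie in $F_2$, shiftedness immediately yields $\{j_1,j_2\}\in F_2$ and at least one of $\{i_1,k_2\},\{k_1,i_2\}\in F_2$; these two pairs together with $e_3$ give a bad configuration in one stroke, and the ``moreover'' clause drops out since if any non-$I$-intersecting pair were present the same reasoning would cap the count at four. Your route replaces this by an explicit case-split on which subset of $\{p_1,p_2,p_3,p_4\}$ lies in $F_2$, using only the single ``$\{p,q,e_3\}$ bad'' observation. The interlocking exclusions do work out exactly as you indicate (the pairs $p_1,p_4$ and $p_2,p_3$ are mutually exclusive, and each remaining one- or two-element subset leaves at most four candidates), so the casework is sound. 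The trade-off is clear: the paper's argument is terser because it exploits the ambient shift-invariance of the auxiliary hypergraphs, while yours is more self-contained and would go through even without that structural property.
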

\begin{proof}
Let $T=\{(i_1,j_1,k_1),(i_2,j_2,k_2),(i_3,j_3,k_3)\}$ 
be a {good} triple.

(i) Let $j_1<j_2<j_3$ and assume 
that $\left( F_1\cap \bigcup T\right)\not\subset I$.
Then, since hypergraph $F_1$ is shifted, $\{j_1\}\in F_1$ and
$T$ is a~{bad} triple because of the edges $\{j_1\}$, $(i_2,j_2,k_2)$, $(i_3,j_3,k_3)$,
a~contradiction.

(ii) Let assume by contradiction that 
$6$ edges from $F_2$ are contained in $\{i_1,j_1,k_1,i_2,j_2,k_2\}$.
Then $\{j_1,j_2\}\in F_2$ and at least one of the edges 
$\{i_1,k_2\}, \{i_2,k_1\}$ is in $F_2$. 
Let us assume that $\{i_1,k_2\}\in F_2$. 
Then, $T$ is {bad} because of the edges $\{j_1,j_2\},\{i_1,k_2\},(i_3,j_3,k_3)$.
\end{proof}

For a triple $T\in M^{(3)}$ and for $i=1,2,3$, let $f_i(T)$ 
be the number of edges of $F_i$ contained in $\bigcup T$. 
Clearly, $f_1(T)\leq 9$, $f_2(T)\leq 27$ and $f_3(T)\leq 27$ for any triple $T$.
However, if $T$ is {good}, then, by Claim~\ref{cl12}, 
we immediately infer that $f_1(T)\leq 3$ and $f_2(T)\leq 15$.
Our next result shows how to bound $f_1(T)$ and $f_2(T)$ more precisely
for {good} triples for which  $f_3(T)$ is large.

\begin{claim}\label{cl33}
Let $T$ be  a {good} triple.
\begin{itemize}
\item[(i)] If $f_3(T)\geq 24$, then $f_1(T)=f_2(T)=0$.
\item[(ii)] If $f_3(T)= 20$, then $f_1(T)\leq 1$ and $f_2(T)\leq 12$.
\item[(iii)] If $f_3(T)\leq 19$, then $f_1(T)\leq 3$ and $f_2(T)\leq 15$. 

Moreover,
the only triples for which  $f_3(T)=19$,  $f_2(T)=15$, and $f_1(T)=3$, are those in which 
each edge of $H$ contained in $\bigcup T$ intersects $I$.
\item[(iv)] If $f_3(T)=21$, then $f_1(T)\leq 1$ and $f_2(T)\leq 10$.
\item[(v)] If $22\leq f_3(T)\leq23$, then $f_1(T)=0$ and $f_2(T)\leq 7$.
\end{itemize}
\end{claim}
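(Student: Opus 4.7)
The plan is to prove all five parts by a unified case analysis on $f_3(T)$, exploiting the shifted structure of $F_3$. Within each edge $(i_l,j_l,k_l)$ of $T$ assign ranks $1,2,3$ to $i_l,j_l,k_l$ respectively; an edge of $F_3$ contained in $\bigcup T$ picks exactly one vertex per edge of $T$ and so corresponds to a point of $\{1,2,3\}^{3}$. Because $F_3$ is shifted, the resulting subset $D$ of the grid is a down-set, and its complement $D^{c}$ is an up-set of size $27-f_3(T)$. For each value of $f_3(T)$ in the statement, $D^{c}$ has only a handful of possible shapes.

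The mechanism for generating contradictions is always the same. Given an $F_1$-edge $\{v\}$ (which, by Claim~\ref{cl12}(i), lies in $I$) or an $F_2$-edge $\{v,w\}$, the aim is to extend it by two, respectively one, pairwise disjoint $F_3$-edges in $\bigcup T\setminus\{v\}$ or $\bigcup T\setminus\{v,w\}$, so that the union of the three $H$-edges meets $I$ in at most two vertices; such an extension is a bad configuration for $T$, contradicting goodness. In the grid picture, the required pair of disjoint $F_3$-edges amounts to two triples of $D$ that differ in every coordinate, and this reduces to a quick check inside a specified $2\times 2\times 2$ sub-grid of $\{1,2,3\}^{3}$ determined by $\{v\}$ or $\{v,w\}$, a check which is immediate from the shape of $D^{c}$.

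Case (i) is clean: with $|D^{c}|\le 3$ all missing points sit near $(3,3,3)$, so every relevant $2\times 2\times 2$ sub-grid contains a pair of disjoint triples in $D$, and any $F_1$- or $F_2$-edge would close up to a bad triple; hence $f_1(T)=f_2(T)=0$. Cases (ii), (iv) and (v) are handled in the same way, differing only in the finite list of possible shapes of $D^{c}$ at sizes $7$, $6$, and $4$--$5$ respectively, and in the corresponding $F_1$- and $F_2$-edges each shape rules out. In case (iii) the two generic inequalities come directly from Claim~\ref{cl12}; the ``moreover'' assertion then follows because $f_1(T)=3$ forces all three $I$-vertices into $F_1$ and $f_2(T)=15$ forces, via Claim~\ref{cl12}(ii), every $F_2$-edge in $\bigcup T$ to meet $I$, after which a further application of the bad-configuration mechanism (two disjoint $F_3$-edges in $\{2,3\}^{3}$ together with an $F_1$-vertex of $I$) rules out any $F_3$-edge avoiding $I$, forcing $D^{c}=\{2,3\}^{3}$.

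The main obstacle is the combinatorial bookkeeping: each value of $f_3(T)$ admits several possible shapes of $D^{c}$, and for every such shape one must verify that any excess $F_1$- or $F_2$-edge indeed produces the required pair of disjoint $F_3$-triples in the appropriate $2\times 2\times 2$ sub-grid. The argument is finite and elementary once the down-set picture is in place, but a disciplined use of the natural $S_3$-symmetry among the three edges of $T$ is essential to keep the list of sub-cases short.
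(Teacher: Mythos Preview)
Your down-set model on $\{1,2,3\}^{3}$ is a clean reformulation of exactly what the paper does: both arguments proceed by case analysis on $f_3(T)$, use shiftedness to locate which $F_3$-edges must be present, and then exhibit bad configurations. The paper works more concretely (it singles out the edge $\{j_1,j_2,j_3\}$, your point $(2,2,2)$, and repeatedly uses the implication $(2,2,2)\in D^{c}\Rightarrow\{2,3\}^{3}\subseteq D^{c}$), but the underlying logic is identical.

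There is, however, a genuine slip in your mechanism. A bad configuration needs \emph{three} disjoint $H$-edges, so an $F_2$-edge must be supplemented by two further $H$-edges, not one; your ``$2\times2\times2$ sub-grid'' picture implicitly looks for only a single complementary $F_3$-triple. In case (i) one can indeed pair any $F_2$-edge with two disjoint $F_3$-triples in $\{2,3\}^{3}$, but in case (ii) this fails: when $f_3(T)=20$ one may have $D\cap\{2,3\}^{3}=\{(2,2,2)\}$ (take $D^{c}=\{2,3\}^{3}\setminus\{(2,2,2)\}$), so no antipodal pair in $\{2,3\}^{3}$ exists. The paper's route there is to use two $F_2$-edges together with one $F_3$-edge: from $f_2(T)\ge 13$ and Claim~\ref{cl12}(ii) one extracts $\{i_1,k_2\},\{k_1,i_2\}\in F_2$, and then $\{i_1,k_2\},\{k_1,i_2\},\{j_1,j_2,j_3\}$ is bad. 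The same two-$F_2$ pattern is what actually drives the ``moreover'' clause of (iii): $f_2(T)=15$ forces $(2,2,2)\in D^{c}$ directly via this configuration, whence $D^{c}=\{2,3\}^{3}$; your proposed route through ``two disjoint $F_3$-edges in $\{2,3\}^{3}$'' is unavailable precisely because you are trying to prove $D\cap\{2,3\}^{3}=\emptyset$. Once you allow both patterns, $F_2+F_3+F_3$ and $F_2+F_2+F_3$, your plan goes through and matches the paper's argument.
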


\begin{proof}
Let $T=\{(i_1,j_1,k_1),(i_2,j_2,k_2),(i_3,j_3,k_3)\}$ 
be a {good} triple.

(i) Observe that since $f_3(T)\geq 24$, 
one of the following pairs of edges must be in~$H$.
\begin{center}
\includegraphics[scale=0.1]{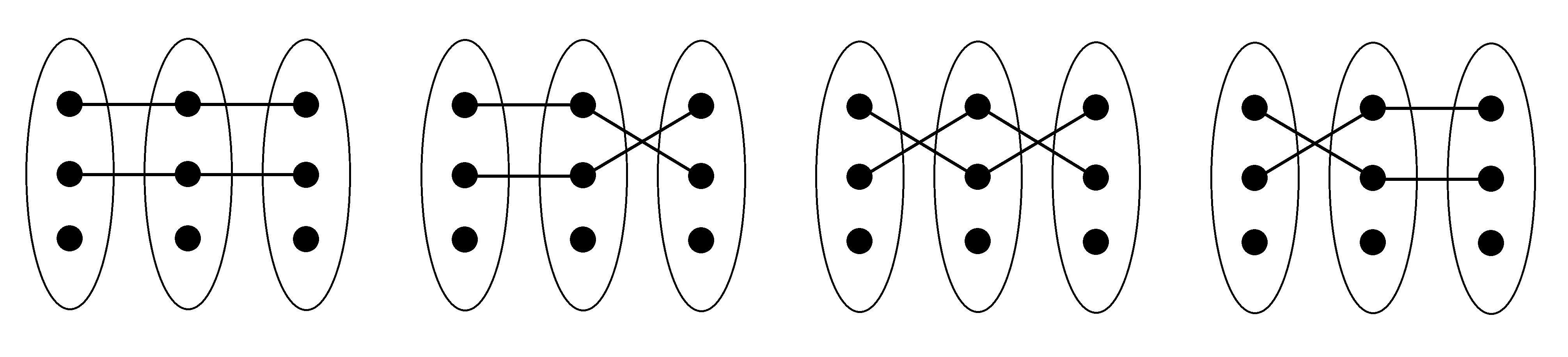}\\Fig. 2.\\ 
\end{center}
Let $e,f\in F_3$ be disjoint edges such that $e,f\subset\{j_1,j_2,j_3,k_1,k_2,k_3\}$,
and let us assume that $i_1<i_2<i_3$. 
If $f_1(T)\neq 0$, then $i_1\in F_1$ and so $T$ is {bad} because of $\{i_1, e,f\}$.
Similarly, if $f_2(T)\neq 0$, then $\{i_1,i_2\}\in F_2$ 
and again $T$ is {bad}, while we assumed that $T$ is {good}.

(ii) Observe that if  $\{j_1,j_2,j_3\}\notin F_3$, 
then every edge contained in $\{j_1,j_2,j_3,k_1,k_2,k_3\}$
is not in $F_3$. Since there are $8$ such edges,
we have $f_3(T)\leq 19$. 
Thus, if $f_3(T)\geq 20$, then $\{j_1,j_2,j_3\}\in F_3$, 
and because $T$ is {good}, it is easy to see that $f_1(T)\leq 1$. 
Now assume by contradiction that $f_2(T)\geq 13$. 
Then, there are two edges in $T$, let say $(i_1,j_1,k_1), (i_2,j_2,k_2)$, 
such that at least five edges of $F_2$ are contained in theirs set of vertices.
By Claim~\ref{cl12}, $\{i_1,k_2\}, \{k_1,i_2\}\in F_2$
and thus, $T$ is {bad} because of the edges $\{i_1,k_2\}, \{k_1,i_2\}, \{j_1,j_2,j_3\}$.

(iii) It is a direct consequence of Claim~\ref{cl12} and the fact that  $\{j_1,j_2,j_3\}\notin F_3$,
since then $f_2(T)\le 12$ (see (ii) above).

(iv) Since $f_3(T)=21$, we know that $\{j_1,j_2,j_3\}\in F_3$ and $\{k_1,k_2,k_3\}\notin F_3$.
Therefore, at least one pair of edges from Fig. 3. and Fig. 4. is in~$F_3$.
\begin{center}
\includegraphics[scale=0.1]{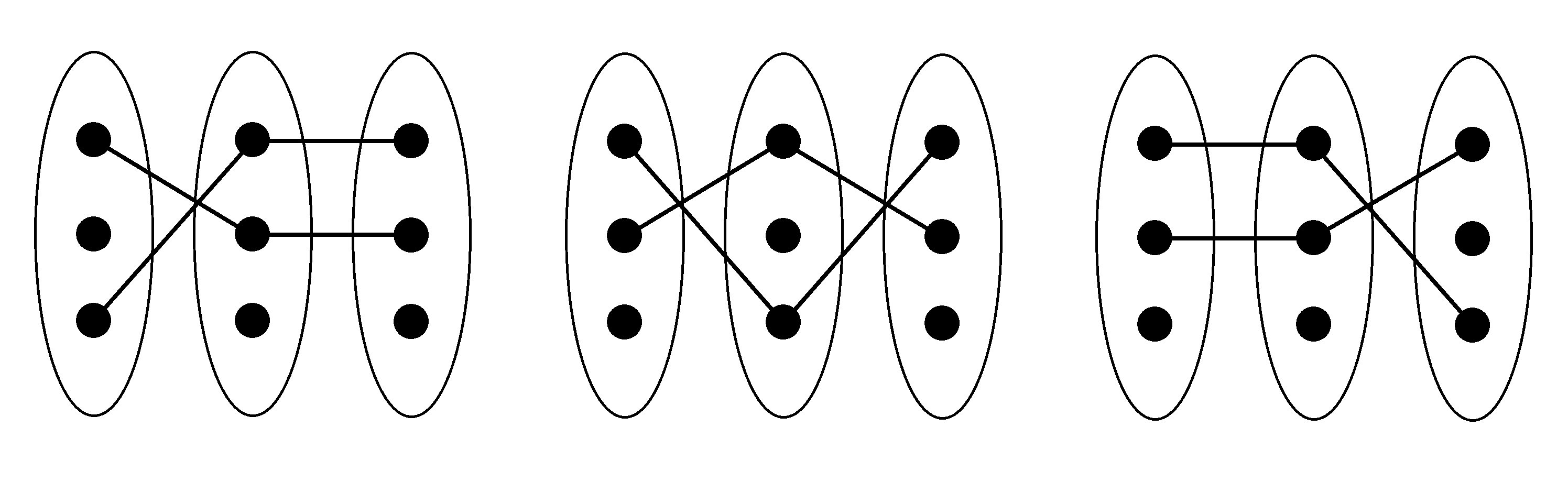}\\Fig. 3.\\ 
\end{center}
\begin{center}
\includegraphics[scale=0.1]{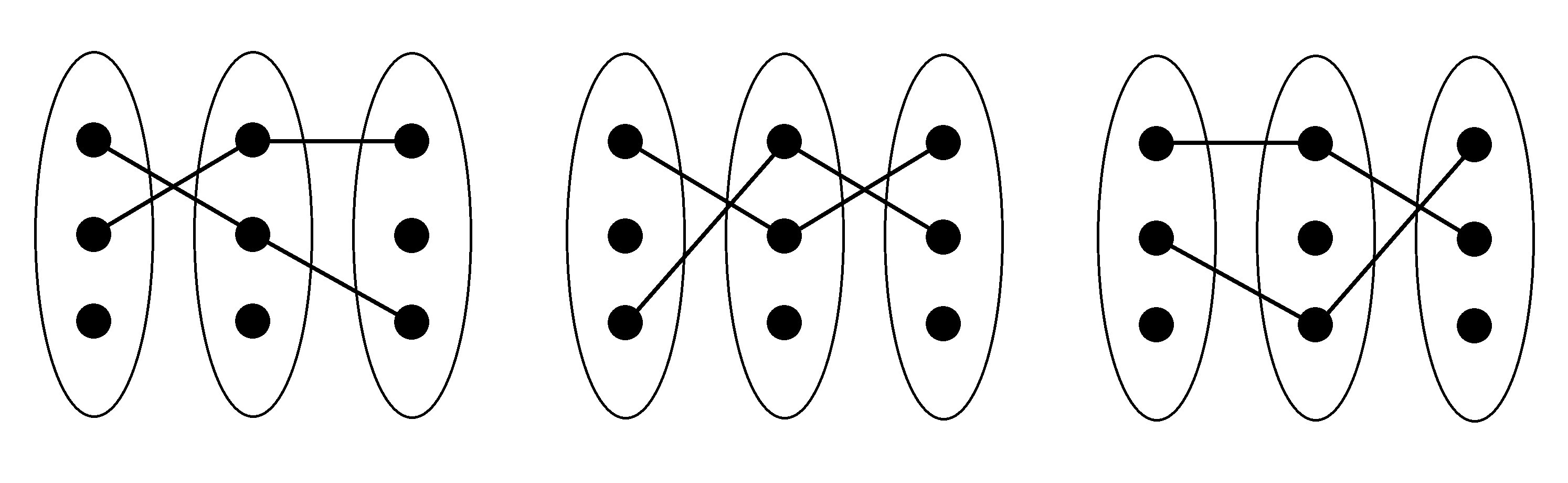}\\Fig. 4.\\ 
\end{center}
Since such a pair saturates only one vertex from $I$, we have $f_1(T)\le 1$.
To estimate $f_2(T)$ let assume that $j_2$ is not saturated by this pair of edges.
Then, $\{i_1,j_2\}, \{j_2,i_3\}\notin F_2$, because $T$ is {good}.
Consequently, $\{i_1,k_2\}, \{j_1,j_2\}$, $\{j_2,j_3\}, \{k_2,i_3\}$
are also not in $F_3$ and thus, at most six edges of $F_2$ 
are contained in $\{i_1,j_1,k_1,i_2,j_2,k_2\}$ or in $\{i_2,j_2,k_2,i_3,j_3,k_3\}$.
Now, since $\{j_1,j_2,j_3\}\in F_3$, using the same argument as in (ii),  
we conclude that at most four edges of $F_2$ are contained in $\{i_1,j_1,k_1,i_3,j_3,k_3\}$.
Hence, $f_2(T)\leq 10$.

(v) From (i) we know that if in $T$ we can find
one of the pairs of edges marked on Fig. 2, then $f_1(T)=f_2(T)=0$. 
Thus, let  assume that for each of these pairs at least one edge is not in $F_3$
and $22\leq f_3(T)\leq 23$. Hence $\{j_1,j_2,j_3\}\in F_3$ 
and $\{k_1,k_2,k_3\}\notin F_3$.
Now consider $\{j_1,j_2,k_3\}$, $\{j_1,k_2,j_3\}$, 
$\{k_1,j_2,j_3\}$. It is easy to check that if at most one of them 
is in $F_3$, then $f_3(T)\le 21$. Thus, we split our further argument into
two cases.

\medskip

{\it Case 1.  All three edges $\{j_1,j_2,k_3\}$, $\{j_1,k_2,j_3\}$, 
$\{k_1,j_2,j_3\}$ are in $F_3$.} 

\smallskip

Then, $\{j_1,k_2,k_3\}$, 
$\{k_1,j_2,k_3\}$, $\{k_1,k_2,j_3\}$, $\{k_1,k_2,k_3\}\notin F_3$.
Therefore, as $f_3(T)\ge 22$, at least 
two pairs of edges shown on Fig. 3. are in~$F_3$.
Let say these are $\{i_1,k_2,k_3\}, \{k_1, j_2,j_3\}$ 
and $\{k_1,k_2,i_3\}, \{j_1,j_2,k_3\}$. 
Since $T$ is good, edges 
$\{j_1,i_2\}$, $\{j_1,i_3\}$, $\{i_2,j_3\}$, $\{i_1,j_3\}$ are not in $F_2$,
and because $F_2$ is shifted, the edges of $F_2$ 
contained in $\bigcup T$ are contained in the set
$\{\{i_1,i_2\}$, $\{i_1,j_2\}$, $\{i_1,k_2\}$, $\{i_2,i_3\}$, $\{j_2,i_3\}$, $\{k_2,i_3\}$, $\{i_1,i_3\}\}$.
Hence, $f_2(T)\leq 7$.
It is also easy to observe that in that case $f_1(T)=0$.

\medskip

{\it Case 2. Exactly two of the edges $\{j_1,j_2,k_3\}$, 
$\{j_1,k_2,j_3\}$, $\{k_1,j_2,j_3\}$ are in $F_3$.} 

\smallskip

Without loss of generality let $\{j_1,j_2,k_3\}$, $\{j_1,k_2,j_3\}\in F_3$.
Then, $\{k_1,j_2,j_3\}$, $\{k_1,j_2,k_3\}$, $\{k_1,k_2,j_3\}$, $\{k_1,k_2,k_3\}\notin F_3$.
Therefore, if $f_3(T)=23$, then all other edges are in $F_3$, 
and so two pairs of edges shown on Fig. 3. are in $F_3$. 
Thus, as we have shown in the proof of Case 1, $f_2(T)\leq 7$.
Let now consider the case when $f_3(T)=22$.
If both pairs of edges $\{j_1,k_2,j_3\}, \{k_1,i_2,k_3\}$ and 
$\{k_1,k_2,i_3\}$, $\{j_1,j_2,k_3\}$ are in $F_3$, then again $f_2(T)\leq 7$.
Let now assume that only one of these pairs is in $F_3$,
let say $\{j_1,k_2,j_3\}, \{k_1,i_2,k_3\} \in F_3$. 
Then also a pair $\{j_1,k_2,k_3\}, \{k_1,j_2,i_3\}$ is in $F_3$.
Thus, $\{i_1,j_2\}$, $\{j_2,i_3\}$, $\{i_1,j_3\}$, $\{i_2,j_3\}\notin F_2$,
and therefore, $f_2(T)\leq 7$.
In that case we also have $f_1(T)=0$.
\end{proof}

Now we  bound the number of edges in $\Sh(G)$. First of all let us 
remove from $M$ six edges so that in the remaining matching $\bar M$ 
we have only good triples (see Claim~\ref{cl:bad}). In this way we omit at most $9n^2$ 
edges of $\Sh(G)$. Let us recall also that the number of untraceable 
edges of $\Sh(G)$ is at most $31 n^2$. 
Finally, observe that for each edge $f\in F_i$ there are at most
$\binom{n-3s}{3-i}$ edges $e\in \Sh(G)$ such that $e\cap V(M)=f$.
Thus,  the number of edges 
in $\Sh(G)$ is bounded from above by
$$e(\Sh(G))\leq   |F_1|\binom{n-3s}{2}+|F_2|(n-3s)+|F_3|+ 40 n^2.$$
To bound $|F_i|$, let us sum $f_i(T)$ over all $T\in {\bar M}^{(3)}$.
Observe  that in such a~sum each edge from $F_i$ is cour nted 
exactly $\binom{s-i}{3-i}$ times.
Thus,
$$e(\Sh(G)) \leq \sum_{T\in {\bar M}^{(3)}}\left({f_1(T)\frac{(n-3s)^2}{s^2}} 
+ {f_2(T)\frac{n-3s}{s}} + f_3(T) \right) + 41 n^2.$$
%By Claim~\ref{cl:bad}, there are at most $4s^2$ {bad} triples,
%so we can easily compute the following part of the above sum.
%$$\sum_{T: \text{ T is bad}}
%{\left( 9 \frac{(n-3s)^2}{s^2} +  27 \frac{n-3s}{s}  + 27\right)} \leq 36 n^2.$$
%Since Theorem~\ref{thm:main} holds for $s\leq n/55$ 
%(see Bollob\'{a}s, Daykin and Erod\H{o}s \cite{BDE}), 
%we may assume that $s=an$ for some $a\in(\frac{1}{55},\frac{1}{3})$.
Now we divide {good} triples into $27$ groups,
depending on $f_3(T)$. 
If  $T_i=\{T\in {\bar M}^{(3)}: f_3(T)=i\}$ 
for $i=1,\ldots,27$, then
$$ e(\Sh(G)) \leq \sum_{i=1}^{27}{\sum_{T\in T_i}{\left({f_1(T)\frac{(n-3s)^2}{s^2}} 
+ {f_2(T)\frac{n-3s}{s}} + f_3(T) \right) }} + 41 n^2.$$
Let now denote $x_1=\sum_{i=1}^{19}{|T_i|}$, $x_2=|T_{20}|$, 
$x_3=|T_{21}|$, $x_4=|T_{22}|+|T_{23}|$, $x_5=\sum_{i=24}^{27}{|T_i|}$.
By Claim~\ref{cl33}, we get the following bound.
\begin{eqnarray*}
e(\Sh(G)) &\leq & (3 x_1 + x_2 + x_3) \frac{(n-3s)^2}{s^2} \\
&+& (15 x_1 + 12 x_2 + 10 x_3 + 7 x_4) \frac{n-3s}{s} \\
&+& (19 x_1 + 20 x_2 + 21 x_3 + 23 x_4 + 27 x_5) + 41 n^2.
\end{eqnarray*}
Now it is sufficient to maximize the above function
under the conditions $\sum_{i=1}^5{x_i}\leq \binom {s-6}3$ and 
$x_i\geq 0$ for every $i=1,\ldots,5$.
%Let us denote $s=an$. 
Then, we are to maximize a function 
$$f_{s,n}(x_1,x_2,x_3,x_4,x_5)=\sum_{i=1}^5\alpha_i(s,n) x_i,$$
where 
\begin{align*}
\alpha_1(s,n)&= 3(n-3s)^2/s^2+15(n-3s)/s+19\\
\alpha_2(s,n)&= (n-3s)^2/s^2+12(n-3s)/s+20\\
\alpha_3(s,n)&= (n-3s)^2/s^2+10(n-3s)/s+21\\
\alpha_4(s,n)&= 7(n-3s)/s+23\\
\alpha_5(s,n)&= 27\,,
\end{align*}
over domain $\sum_{i=1}^5x_i\le \binom{s-6}3$, $x_i\ge 0$ for $i=1,2,\dots,5$.
This is a linear function of $x_i$'s, so in order to maximize it it 
is enough to check which of the coefficients $\alpha_i(s,n)$ is the largest 
one and set the variable $x_i$ which corresponds to this coefficient 
to be maximum, while the rest of the variables should be equal to zero.

It is easy to check that if $s=an$ and $a<a_0$, where $a_0=(\sqrt{321}-3)/52$, 
then  $\alpha_1(s,n)$ dominates, and so for $s=an$, $a<a_0$,
we have 
$$ e(\Sh(G))\leq \frac{1}{6} (3 s - 3 s^2 + s^3)+O(n^2),$$
what nicely matches the lower bound for $e(\Sh(G))$ 
given by
$$e(\Sh(G))\ge \binom n3 - \binom {n-s}3=\frac{1}{6} (3 s - 3 s^2 + s^3)+O(n^2)\,.$$
Furthermore, in order to achieve this bound for all but $O(n^2)$ 
triples $T$ we must have 
$f_3(T)=19$, $f_2(T)=15$, $f_1(T)=3$, which is possible only if all edges of such 
triple intersect $I$ (see Claim~\ref{cl33}(iii)). Consequently, for this range 
of $s$, in $\Sh(G)$ there is a subset $I$, $|I|=s$, which covers all but at most 
$O(n^2)$ edges of $\Sh(G)$.

For $a>a_0$ the dominating coefficient is $\alpha_5(s,n)=27$, which gives 
$$e(\Sh(G))\le \frac{9}{2} s^3+O(n^2)\,,$$
matched by the lower bound
$$e(\Sh(G))\ge \binom {3s+2}{3}=\frac{9}{2} s^3+O(n^2)\,.$$
Again to achieve this bound for all but $O(n^2)$ triples we must have $f_3(T)=27$.
But then the largest independent set contained in $\bigcup M$ has at most 
$O(n^{2/3})$ vertices and so, because of shifting, $\bigcup M$ contains a clique 
of size at least $s-O(n^{2/3})=s-O(s^{2/3})$.

In order to complete the proof we need to consider the remaining case when 
$s=(a_0+o(1)) n$. Since $\alpha_1(a_0n,n)=\alpha_5(a_0n,n)>\alpha_i(a_0n,n)$ for $i=2,3,4$,
we infer that in $\Sh(G)$ all  triples, except of at most $O(n^2)$, must 
be of one of two types: either for such a triple $T$ we have $f_3(T)=27$, $f_2(T)=f_1(T)=0$, 
or $f_3(T)=19$, $f_2(T)=15$, $f_3(T)=3$ and all edges of $H$ contained in $\bigcup T$ intersect $I$. 
It is easy to see that  it is possible only when one of these two types of triples dominate. 
Indeed, let $M'\subseteq M$ denote the set of edges of$M$ which contain a singleton edge
form $F_1$. Since the number of triples which are  contained neither in $M'$, 
nor in $M\setminus M'$ is $O(s^2)$, so $\min\{|M'|,|M\setminus M'|\}$ is bounded and,
consequently, all but $O(s^2)=O(n^2)$ triples must be of one of our 
two types.
Hence
\begin{align*}
\Sh(G)&\in \Cov_3(n,s;O(n^{-1}))\cup \Cl_3(n,s;O(n^{-1/3}))\\
&\subseteq \Cov_3(n,s;\eps)\cup \Cl_3(n,s;\eps)\,,
\end{align*}
and the assertion follows.
\end{proof}

%%%%%%%%%%%%%%%%%%%%%%%%%%%%%%%%%%%%%%%%%%%%%%%%%%%%%%%%%%%%%%%%%
\bibliographystyle{plain}

%\bibliography{bibliography}

\end{document}